\documentclass[11pt]{amsart}
\usepackage[normalem]{ulem}
\usepackage[usenames]{color}
\usepackage{graphicx,amscd,psfrag,amssymb}

 \newcommand{\R}{\mathbb R}
 \newcommand{\Z}{\mathbb Z}\newcommand{\nn}{\mathbb N}
\newcommand{\ep}{\varepsilon}
\newcommand{\blocks}[1]{\mathcal{B}_{#1}}
\DeclareMathOperator{\Per}{Per}
\DeclareMathOperator{\Int}{Int}

\theoremstyle{plain} \newtheorem{thm}{Theorem}
\newtheorem{cor}[thm]{Corollary} \newtheorem{prop}[thm]{Proposition}

\theoremstyle{definition} \newtheorem{defn}[thm]{Definition}

\newtheorem{ex}[thm]{Example} 

\theoremstyle{remark} 
\newcommand{\intgraph}{\mathcal{I}}
\newcommand{\transgraph}{\mathcal{T}}
\newcommand{\graph}{\mathcal{G}}
\DeclareMathOperator{\ind}{ind}

\newcommand{\ent}{h}
\newcommand{\olent}[1]{\ent_{#1}}
\newcommand{\olenti}{\olent{\intgraph}}
\newcommand{\sep}[1]{\mathcal{B}_{#1}^{\text{sep}}}

\begin{document}

\title{Entropy for symbolic dynamics with overlapping alphabets}

\author{Fabio Drucker}   \author{David Richeson}   \author{Jim Wiseman} \address{Dickinson College\\ Carlisle, PA 17013} \email{druckerf@dickinson.edu}  \address{Dickinson College\\ Carlisle, PA 17013} \email{richesod@dickinson.edu} \address{Agnes Scott College \\ Decatur, GA 30030} \email{jwiseman@agnesscott.edu}


\begin{abstract}
We consider shift spaces in which elements of the alphabet may overlap nontransitively.  We define a notion of entropy for such spaces, give several techniques for computing lower bounds for it, and show that it is equal to a limit of entropies of (standard) full shifts. When a shift space with overlaps arises as a model for a discrete dynamical system with a finite set of overlapping neighborhoods, the entropy gives a lower bound for the topological entropy of the dynamical system.
\end{abstract}

\maketitle

\section{Introduction}

There is a long history of using symbolic dynamics to model more complicated dynamical systems. Suppose, for example, that we want to understand the dynamics of a continuous dynamical system $f:X\to X$ on a compact metric space $X$.  If we have compact sets $N_{1},\ldots,N_{n}\subset X$ and  an $f$-orbit $(x, f(x), f^2(x),\ldots)$ that remains in $\bigcup_i N_i$, then there is a symbol sequence $(i_0,i_1,\ldots)\in\{1,\ldots,n\}^{\nn}$ called an \emph{itinerary} such that  $f^j(x) \in N_{i_j}$ for all $j\ge 0$ (the itinerary may not be unique). We can then use the properties of the set of itineraries (a shift space) to tell us about the dynamics of $f$.

One example of this approach is a Markov partition, in which the sets  $N_{1},\ldots,N_{n}\subset X$ cover the entire space, overlap only on their boundaries, and map across each other in topologically simple ways (\cite[\S9.6]{Ro}).  In this case the resulting shift space of itineraries is a subshift of finite type.  Another example is when the $N_i$'s are pairwise disjoint Conley index pairs , resulting in a cocyclic subshift (\cite{szymczak-decomp,KwaCocyclic,KwaTransfer}).

While these approaches have been very fruitful, there are limitations with each of them.  Markov partitions can be very difficult to construct in practice, and the Conley index pairs must be disjoint, making it impossible to cover a connected space.

In this article we allow the $N_{i}$ to have nontrivial intersections. The complication that arises if we allow the $N_i$'s to intersect is that an orbit may have multiple itineraries. For example, a fixed point in $N_{1}\cap N_{2}$ could have itineraries $(1,1,1,\ldots)$, or $(1,2,1,2,\ldots)$, or any sequence of 1's and 2's. Our task is to extract useful dynamical information about $f$ from the shift space while being careful about the nonuniqueness of itineraries. 

We begin with the definition of our object of study---a shift space with overlaps.

\begin{defn}
A \emph{shift space with overlaps} is a pair $(\Sigma,\intgraph)$ in which $\Sigma$ is a one-sided shift space on the alphabet $\{1,\ldots,n\}$ and $\intgraph$ is a simple graph with vertex set $\{1,\ldots,n\}$. We call $\intgraph$ an \emph{intersection graph}. Two words  $(a_{0},a_{1},\ldots),(b_{0},b_{1},\ldots)\in\Sigma$ (finite or infinite) are \emph{indistinguishable} iff there is an $\intgraph$-edge between the vertices $a_{i}$ and $b_{i}$ for all $i$.
\end{defn}

Shift spaces with overlaps are interesting objects to study on their own, but most often they are a model for an existing dynamical system. Thus we have the following definition.

\begin{defn}
A compact metric space $X$, a continuous map $f:X\to X$, and a collection of nonempty compact sets $N_1,\dots,N_n\subset X$ is called a \emph{dynamical realization} of $(\Sigma,\intgraph)$ provided:

\begin{enumerate}
\item
If $N_{i}\cap N_{j}\ne\emptyset$ and $i\ne j$, then there is an edge from $i$ to $j$ in $\intgraph$.
\item
If $(a_0,a_1,\dots)$ is a (finite or infinite) word in $\Sigma$, then there is a point $x\in X$ such that $f^i(x)\in \Int(N_{a_i})$ for all $i$.
\end{enumerate}
\end{defn}

In practice, we use topological methods to verify that (2) holds. The methods may be elementary, such as boxes that stretch across other boxes, as in the case of Markov partitions or their topological generalizations shown in Figure \ref{fig:stretchacross}. Or they may be more sophisticated; for example, in  \cite{RW7}  we use techniques from Conley index theory to verify the property.

\begin{figure}[ht]
\includegraphics[width=4.5in]{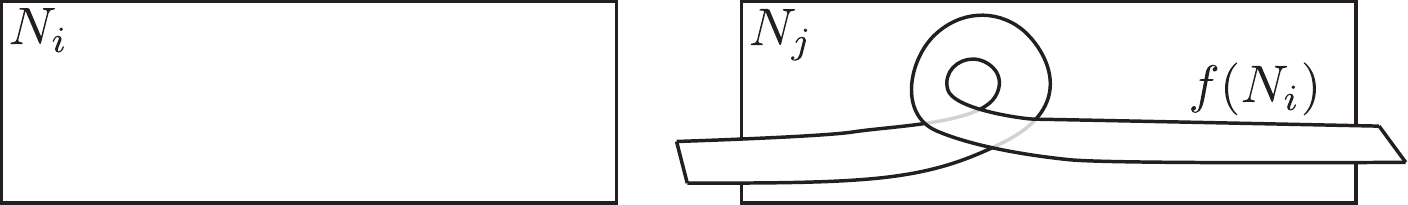}
\caption{}\label{fig:stretchacross}
\end{figure}

For much of this article we assume that $\Sigma$ is a subshift of finite type. When $\Sigma$ is the vertex shift associated to a transition graph $\transgraph$ we superimpose the graphs $\transgraph$ and $\intgraph$ with the edges of $\transgraph$ being solid arrows and the edges of $\intgraph$ dashed segments (see Figure \ref{fig:bigexample}, for example); we will refer to this merged graph as $\transgraph\intgraph$.

We are using $(\Sigma,\intgraph)$ as an abstract model of our dynamical system $f:X\to X$. As such, two indistinguishable elements in $\Sigma$ could be itineraries of the same point in $X$. Thus in a sense we must treat two indistinguishable elements in $\Sigma$ as the same point. However, what makes this scenario interesting is that indistinguishability is not a transitive property, and is thus not an equivalence relation. In general, $(\Sigma,\intgraph)$ is not a topological space.  (It is a tolerance space---see, for example, \cite{soss}.)

Our goal is to extract information about the dynamical complexity of $f$ from $(\Sigma, \intgraph)$, and most often in this article, from the graph $\transgraph\intgraph$.  
We define a notion of topological entropy for shifts with overlap, $\olenti(\Sigma)$, in Section \ref{sec:entropyshiftwithoverlap} and show that it is a lower bound for the entropy of a dynamical realization $f$.  We estimate $\olenti(\Sigma)$ using only disjoint sets in Section~\ref{sec:indep}, discuss decompositions of the shift space in Section~\ref{sec:decomp}, and estimate $\olenti(\Sigma)$ using connected components in Section~\ref{sec:sofic}.  In Section~\ref{sec:higher} we discuss higher shifts, and in Section~\ref{sec:entropytheorem} we prove that $\olenti(\Sigma)$ can be computed as a limit of the entropies of ordinary (nonoverlapping) embedded shifts.

One of the motivations for undertaking this study is its potential application to computational dynamical systems. There is an active research program of using the Conley index and computational topology to give rigorous entropy bounds for dynamical systems. In particular,  one could use rigorous computation to obtain an index system (as described in \cite{RW7}) and from that a shift space with overlaps. So it is important to find techniques for computing the entropy, or bounds on the entropy, of shift spaces with overlaps.

\section{Topological entropy of shift spaces with overlap}
\label{sec:entropyshiftwithoverlap}

We are primarily interested in using $(\Sigma,\intgraph)$ to help us obtain a lower bound for the entropy of $f$. Despite the fact that $(\Sigma,\intgraph)$ is not a topological space, we can make a reasonable definition of its topological entropy, and we prove that it is a lower bound for the entropy of $f$ (Theorem \ref{thm:entropybound}).  

If we ignore the intersections, then $\Sigma$ is a shift space and we can compute $\ent(\Sigma)$ by computing the growth rate of the number of words of length $n$ as $n$ goes to infinity (\cite[\S4.1]{LM}).  Let $\blocks{n}(\Sigma)$ be the set of words of length $n$; then the topological entropy of $\Sigma$ is
\[\ent(\Sigma)=\lim_{n\to\infty}\frac{1}{n} \log |\blocks{n}(\Sigma)| .\]  
 
Recall that if the shift space $\Sigma$ is a subshift of finite type with transition graph $\transgraph$, then there is a particularly easy way to compute the entropy. The graph $\transgraph$ has an associated \emph{transition matrix}, $A_{\transgraph}$, in which the $(i,j)$-entry is 1 if there is an edge from the $i$th vertex to the $j$th vertex and is 0 otherwise. Then \[h(\Sigma)=\log(\lambda),\] where $\lambda$ is the Perron eigenvalue of $A_{\transgraph}$---the unique largest real eigenvalue of $A_{\transgraph}$.

In our setting, we want to count the number of words of length $n$ that are mutually distinguishable, which in general is less than $|\blocks{n}(\Sigma)|$. (For example, if vertices 1 and 2 overlap, then we would not be able to distinguish the words $(1,3)$ and $(2,3)$.)  Let $B$ be a set of words in $\Sigma$ of length $n$.  We say that $B$ is \emph{$n$-separated} if no two words in $B$ are indistinguishable.  Let $\sep{n}(\Sigma,\intgraph)$ be an $n$-separated set of maximum cardinality.  We define $\olenti(\Sigma)$, the \emph{topological entropy of a shift space with overlaps}, to be
\[\olenti(\Sigma)= \limsup_{n\to\infty}\frac{1}{n}\log |\sep{n}(\Sigma,\intgraph)|.\]
  
The following theorem provides the motivation for studying the entropy of shifts with overlaps.
\begin{thm}\label{thm:entropybound}
If $f$ is a dynamical realization of $(\Sigma, \intgraph)$, then $\ent(f) \ge \olenti(\Sigma)$.
\end{thm}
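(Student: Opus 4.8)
The plan is to bound the topological entropy of $f$ from below by exhibiting, for each $n$, a large collection of initial segments of orbits that are pairwise separated in the metric on $X$, and to manufacture this collection directly from a maximum $n$-separated set $\sep{n}(\Sigma,\intgraph)$ of words. Recall that $\ent(f)$ can be computed via $(n,\epsilon)$-separated sets: a set $S\subset X$ is $(n,\epsilon)$-separated if for any two distinct $x,y\in S$ there is some $0\le j<n$ with $d(f^j(x),f^j(y))\ge\epsilon$, and $\ent(f)=\lim_{\epsilon\to 0}\limsup_{n\to\infty}\frac 1n\log s(n,\epsilon)$, where $s(n,\epsilon)$ is the maximal cardinality of such a set. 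The goal is therefore to show that the number of $(n,\epsilon)$-separated orbit segments is at least $|\sep{n}(\Sigma,\intgraph)|$ for a suitable fixed $\epsilon$.

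First I would fix $\epsilon$ using compactness and the intersection structure. For each pair $i\ne j$ with no $\intgraph$-edge between them, condition (1) in the definition of a dynamical realization forces $N_i\cap N_j=\emptyset$; since these are finitely many pairs of disjoint compact sets, there is a single $\epsilon>0$ so that $d(N_i,N_j)\ge\epsilon$ whenever $i$ and $j$ are not $\intgraph$-adjacent. Next, for each word $w=(a_0,\ldots,a_{n-1})\in\sep{n}(\Sigma,\intgraph)$, property (2) supplies a point $x_w\in X$ with $f^j(x_w)\in\Int(N_{a_j})\subset N_{a_j}$ for all $0\le j<n$; collect these into a set $S=\{x_w: w\in\sep{n}\}$. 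I would then verify that $S$ is $(n,\epsilon)$-separated: given two distinct words $w=(a_j)$ and $w'=(b_j)$ in an $n$-separated set, they are distinguishable, so by definition there is some coordinate $j$ at which $a_j$ and $b_j$ have no $\intgraph$-edge between them (and in particular $a_j\ne b_j$). At that coordinate $f^j(x_w)\in N_{a_j}$ and $f^j(x_{w'})\in N_{b_j}$, and since $N_{a_j}$ and $N_{b_j}$ are $\epsilon$-apart we get $d(f^j(x_w),f^j(x_{w'}))\ge\epsilon$. Hence distinct words give distinct, $(n,\epsilon)$-separated points, so $s(n,\epsilon)\ge|\sep{n}(\Sigma,\intgraph)|$.

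Taking logarithms, dividing by $n$, and passing to the $\limsup$ then yields
\[
\limsup_{n\to\infty}\frac 1n\log s(n,\epsilon)\ \ge\ \limsup_{n\to\infty}\frac 1n\log|\sep{n}(\Sigma,\intgraph)|=\olenti(\Sigma),
\]
and since the left-hand side is at most $\ent(f)$ for this fixed $\epsilon$ (entropy being a supremum/limit over $\epsilon\to 0$ of such quantities), the desired inequality $\ent(f)\ge\olenti(\Sigma)$ follows.

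The main obstacle I anticipate is the uniform choice of $\epsilon$, i.e.\ making sure a single separation constant works simultaneously for all $n$; this is exactly where compactness of the finitely many disjoint pairs $N_i,N_j$ is essential, and it is what lets $\epsilon$ be chosen once and for all before the limit in $n$ is taken. A secondary subtlety is bookkeeping the definition of distinguishability correctly: indistinguishability requires an $\intgraph$-edge at \emph{every} coordinate, so its negation guarantees a single bad coordinate, which is all the argument needs. The appearance of $\Int(N_{a_i})$ rather than $N_{a_i}$ in condition (2) is harmless here, since $\Int(N_{a_i})\subset N_{a_i}$ and the separation estimate only uses membership in the closed sets.
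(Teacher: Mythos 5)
Your proposal is correct and follows essentially the same argument as the paper: fix a separation constant $\ep$ from the finitely many disjoint (equivalently, non-$\intgraph$-adjacent) pairs $N_i,N_j$, use property (2) of a dynamical realization to turn each word in $\sep{n}(\Sigma,\intgraph)$ into an orbit segment, and observe that distinguishability at some coordinate forces $(n,\ep)$-separation of the corresponding orbits, so the bound follows from the separated-set characterization of $\ent(f)$. The paper's proof is just a compressed version of this; your write-up fills in the details (the compactness argument for $\ep$ and the coordinate-wise verification) that the paper leaves implicit.
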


\begin{proof}
Let $\ep = \min d(N_i,N_j)$, where the minimum is over all disjoint pairs $N_i$ and $N_j$.  Then an $n$-separated word in $\Sigma$ corresponds to an $(n,\ep)$-separated orbit for $f$. Since $\displaystyle \ent(f) = \lim_{\ep\to0} \limsup_{n\to\infty} \frac1n \log r(n,\ep)$, where $r(n,\ep)$ is the maximum cardinality of a set of $(n,\ep)$-separated orbits (\cite[\S8.1]{Ro}), the result follows. 
\end{proof}

The rest of the paper is devoted to computing or finding lower bounds for $\olenti(\Sigma)$. 

\section{Independent subsets of the intersection graph}
\label{sec:indep}

In this section we take advantage of the fact that if all words are distinguishable, then we can treat our shift space with overlaps as an ordinary shift space.

\begin{prop}
Let $(\Sigma, \intgraph)$ be a shift space with overlaps in which the edge set of $\intgraph$ is empty.  Then $\olenti(\Sigma)=\ent(\Sigma)$.
\end{prop}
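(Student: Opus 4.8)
The plan is to show that, when $\intgraph$ has no edges, no two \emph{distinct} words can be indistinguishable, so that every length-$n$ word is a legitimate member of a separated set; the two entropies are then forced to agree term by term, and the result follows by taking limits.

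First I would unwind the definition of indistinguishability. Two words $(a_0,\ldots,a_{n-1})$ and $(b_0,\ldots,b_{n-1})$ in $\blocks{n}(\Sigma)$ are indistinguishable precisely when, at every coordinate $i$, the symbols $a_i$ and $b_i$ are joined by an $\intgraph$-edge (with the understanding that a symbol counts as joined to itself, since a word is never distinguishable from itself). When the edge set of $\intgraph$ is empty, the only way coordinate $i$ can satisfy this condition is $a_i=b_i$. Hence indistinguishability forces $a_i=b_i$ for all $i$, i.e.\ the two words coincide; contrapositively, any two distinct length-$n$ words are distinguishable.

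Next I would use this to pin down $\sep{n}(\Sigma,\intgraph)$. By the previous step the entire set $\blocks{n}(\Sigma)$ is $n$-separated, as it contains no pair of distinct indistinguishable words. On the other hand, every $n$-separated set is by definition a subset of $\blocks{n}(\Sigma)$, so its cardinality is at most $|\blocks{n}(\Sigma)|$. Choosing $\sep{n}(\Sigma,\intgraph)$ to be a maximum-cardinality separated set therefore gives $|\sep{n}(\Sigma,\intgraph)|=|\blocks{n}(\Sigma)|$ for every $n$.

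Finally I would substitute into the two definitions of entropy. This yields
\[
\olenti(\Sigma)=\limsup_{n\to\infty}\frac1n\log|\sep{n}(\Sigma,\intgraph)|=\limsup_{n\to\infty}\frac1n\log|\blocks{n}(\Sigma)|,
\]
and since $|\blocks{m+n}(\Sigma)|\le|\blocks{m}(\Sigma)|\cdot|\blocks{n}(\Sigma)|$ the sequence $\log|\blocks{n}(\Sigma)|$ is subadditive, so Fekete's lemma guarantees that the limit defining $\ent(\Sigma)$ exists and equals this $\limsup$. Thus $\olenti(\Sigma)=\ent(\Sigma)$. The only delicate point in the whole argument is the convention governing indistinguishability of a word with itself (equivalently, whether each vertex is regarded as adjacent to itself); once that is settled, nothing beyond unwinding the definitions is needed, and I would not expect any genuine obstacle.
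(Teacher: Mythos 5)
Your proof is correct; the paper states this proposition without any proof, treating it as immediate from the definitions, and your argument is exactly the definitional unwinding that is being left implicit. Your care on the two fine points --- the convention that a word is never distinguishable from itself, and reconciling the $\limsup$ in $\olenti$ with the limit defining $\ent(\Sigma)$ via subadditivity --- is appropriate and does not diverge from the intended argument.
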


\begin{cor}
Let $(\Sigma, \intgraph)$ be a shift space with overlaps. If $\Sigma'\subset\Sigma$ is a shift space in which all elements are pair-wise distinguishable, then $\olenti(\Sigma)\ge \olenti(\Sigma')=\ent(\Sigma')$.
\end{cor}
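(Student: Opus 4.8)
The plan is to establish the two assertions separately: the inequality $\olenti(\Sigma)\ge\olenti(\Sigma')$ and the equality $\olenti(\Sigma')=\ent(\Sigma')$, of which the latter carries the real content.

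First I would dispose of the inequality by a monotonicity argument. Because $\Sigma'\subseteq\Sigma$, every length-$n$ word of $\Sigma'$ is a length-$n$ word of $\Sigma$, and indistinguishability of two such words is decided by the same graph $\intgraph$ in either ambient space. Hence any $n$-separated set of words drawn from $\Sigma'$ is automatically an $n$-separated set of words in $\Sigma$, so $|\sep{n}(\Sigma',\intgraph)|\le|\sep{n}(\Sigma,\intgraph)|$ for every $n$; taking $\frac1n\log$ and then $\limsup$ yields $\olenti(\Sigma')\le\olenti(\Sigma)$.

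For the equality I would first record the trivial direction: since $\sep{n}(\Sigma',\intgraph)\subseteq\blocks{n}(\Sigma')$ we always have $|\sep{n}(\Sigma',\intgraph)|\le|\blocks{n}(\Sigma')|$, so $\olenti(\Sigma')\le\ent(\Sigma')$. The substance is the reverse inequality, and here I would use the hypothesis to reduce to the preceding Proposition. The key observation is that pairwise distinguishability of the words of $\Sigma'$ forces the set of symbols occurring in $\Sigma'$ to be an independent set in $\intgraph$: if two adjacent symbols $a$ and $b$ both appeared in $\Sigma'$, then by shift-invariance the length-one words $a$ and $b$ would both lie in $\blocks{1}(\Sigma')$ and would be distinct yet indistinguishable, contrary to assumption. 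Consequently the restriction of $\intgraph$ to the alphabet of $\Sigma'$ has empty edge set, so within $\Sigma'$ indistinguishability collapses to equality: any two distinct words of length $n$ differ in some coordinate at a pair of non-adjacent symbols and are therefore distinguishable. Thus $\blocks{n}(\Sigma')$ is itself $n$-separated and $|\sep{n}(\Sigma',\intgraph)|=|\blocks{n}(\Sigma')|$ for every $n$, which is exactly the content of the Proposition applied to $\Sigma'$ with its (edgeless) induced intersection graph; it gives $\olenti(\Sigma')=\ent(\Sigma')$.

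The step I expect to be the main obstacle is the correct handling of this last reduction, namely making precise that distinguishability of the \emph{elements} of $\Sigma'$ must be read at the level of finite words and not merely of infinite ones. This matters because two indistinguishable length-$n$ words can in principle admit distinguishable infinite extensions, so a hypothesis phrased only about infinite points would not control the finite-word counts that define $\olenti$. Tying the distinguishability assumption to independence of the alphabet, as above, removes this gap and is what makes the invocation of the Proposition legitimate; once that is in place, combining the equality with the monotonicity inequality gives the full chain $\olenti(\Sigma)\ge\olenti(\Sigma')=\ent(\Sigma')$.
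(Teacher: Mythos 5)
Your overall architecture---monotonicity of separated sets under $\Sigma'\subseteq\Sigma$, plus the identification $\olenti(\Sigma')=\ent(\Sigma')$ by reducing to the edgeless-graph Proposition---is exactly the route the paper intends (it states this corollary without proof, as an immediate consequence of that Proposition), and both your monotonicity half and the easy inequality $\olenti(\Sigma')\le\ent(\Sigma')$ are fine. The problem is the step you yourself flag as the main obstacle: the claim that pairwise distinguishability of the \emph{elements} of $\Sigma'$ forces the alphabet of $\Sigma'$ to be independent in $\intgraph$. If the hypothesis is about infinite points, then your argument is circular: you apply the hypothesis to the length-one words $(a)$ and $(b)$, about which a point-level hypothesis says nothing. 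In fact the implication is false. Take $\intgraph$ on vertices $\{1,2,3,4\}$ with the single edge $1\sim 2$, and let
\[\Sigma'=\{(1,3,3,3,\dots),\ (2,4,4,4,\dots),\ (3,3,3,\dots),\ (4,4,4,\dots)\}.\]
This is a finite (hence compact) shift-invariant set, and every pair of its four points differs in some coordinate at a distinct, non-adjacent pair of symbols (for instance $(1,3,3,\dots)$ and $(2,4,4,\dots)$ differ at coordinate $1$, where $3\not\sim 4$), so all elements are pairwise distinguishable; yet the alphabet contains the adjacent pair $1,2$. So no argument can extract alphabet independence from the point-level hypothesis, and your proposed repair of the finite-versus-infinite issue does not work.

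What saves the statement is the reading the paper clearly intends: indistinguishability is defined there for words ``finite or infinite,'' and the hypothesis is meant to apply at the level of words---this is also the only form needed for its later use, where $\Sigma'=\Sigma_V$ for an independent vertex set $V$ and all distinct finite words are automatically distinguishable. Under that reading your independence argument becomes valid (two adjacent symbols occurring anywhere in $\Sigma'$ would give two distinct, indistinguishable words of length one), but it is then an unnecessary detour: if all finite words of $\Sigma'$ are pairwise distinguishable, then $\sep{n}(\Sigma',\intgraph)=\blocks{n}(\Sigma')$ for every $n$, and $\olenti(\Sigma')=\ent(\Sigma')$ follows immediately from the definitions, which is presumably the paper's implicit proof. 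Under the literal point-level reading, by contrast, both your argument and this trivial one break down, and whether the equality still holds becomes a genuinely nontrivial question; note that in the counterexample above the conclusion happens to hold (both entropies are $0$), so it refutes your lemma but not the corollary itself. To make your proof sound, state explicitly that you interpret ``all elements are pairwise distinguishable'' as a hypothesis on words, finite or infinite.
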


Let $V(\graph)$ and $E(\graph)$ denote the vertex and edge sets of a graph $\graph$, respectively. Given a (nondirected) graph $\graph$, a subset $V\subset V(\graph)$ is \emph{independent} if there are no edges between any pair of vertices in $V$.  The maximum cardinality of an independent subset of $\graph$ is the \emph{independence number} of $\graph$ and is denoted $\ind(\graph)$.  

For any $V\subset V(\intgraph)$, let $\Sigma_{V}=\{(a_{0},a_{1},\ldots)\in\Sigma: a_{i}\in V\}$. Notice that if $\Sigma$ is a shift space, then $\Sigma_{V}$ is also a shift space. A \emph{vertex-induced subgraph} of a graph $\graph$ is a subgraph $\graph'$ with the property that any $\graph$-edge whose endpoints are in $V(\graph')$ is in $E(\graph')$. If $V\subset V(\intgraph)$, let $\intgraph_{V}$ denote the vertex-induced subgraph of $\intgraph$ with vertex set $V$.

Corollaries \ref{cor:indep} and \ref{cor:sft} give us a tactic for finding a lower bound for $\olenti(\Sigma)$. Remove enough vertices of $\intgraph$ so that we have an independent set of vertices; then the corresponding shift space with overlaps has no overlaps---it is just a shift space. The entropy of this shift space is a lower bound for $\olenti(\Sigma)$.

\begin{cor}\label{cor:indep}
If $V\subset V(\intgraph)$ is an independent set, then $\olenti(\Sigma)\ge \olent{\intgraph_{V}}(\Sigma_{V})=\ent(\Sigma_{V})$.
\end{cor}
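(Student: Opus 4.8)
The plan is to derive this corollary by combining the two preceding results, after extracting two consequences of the hypothesis that $V$ is independent. First, because $\intgraph_V$ is the vertex-induced subgraph on $V$ and $V$ contains no pair of $\intgraph$-adjacent vertices, the edge set of $\intgraph_V$ is empty. Second, any two distinct words of $\Sigma_V$ are distinguishable. I would state both observations up front, since each feeds one half of the claimed chain $\olenti(\Sigma)\ge\olent{\intgraph_V}(\Sigma_V)=\ent(\Sigma_V)$.

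For the equality on the right, I would apply the proposition for intersection graphs with empty edge set to the pair $(\Sigma_V,\intgraph_V)$. Since $\Sigma_V$ is a shift space (as noted in the text) and $E(\intgraph_V)=\emptyset$, that proposition gives $\olent{\intgraph_V}(\Sigma_V)=\ent(\Sigma_V)$ with no further work. For the inequality, the key step is the distinguishability check: if $(a_0,a_1,\ldots)$ and $(b_0,b_1,\ldots)$ are distinct elements of $\Sigma_V$, they differ in some coordinate $i$, so $a_i\ne b_i$; both $a_i$ and $b_i$ lie in $V$, and independence of $V$ forbids any $\intgraph$-edge between them. Thus the indistinguishability criterion fails at coordinate $i$, so the two words are distinguishable with respect to $\intgraph$. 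Since $\Sigma_V\subset\Sigma$ is a pairwise-distinguishable shift space, Corollary \ref{cor:sft} applies with $\Sigma'=\Sigma_V$ and yields $\olenti(\Sigma)\ge\olenti(\Sigma_V)=\ent(\Sigma_V)$; chaining this with the equality above completes the argument.

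The only point that is more than bookkeeping—and hence the main thing to verify carefully—is the compatibility of the two notions of distinguishability being chained together: Corollary \ref{cor:sft} measures distinguishability with respect to the full graph $\intgraph$, whereas the empty-edge proposition is applied to $\intgraph_V$. These agree precisely because $\intgraph_V$ is vertex-induced, so any $\intgraph$-edge joining two vertices of $V$ would already be an edge of $\intgraph_V$; since every coordinate of every word in $\Sigma_V$ lies in $V$, no edge outside $\intgraph_V$ is ever relevant to separating elements of $\Sigma_V$. This is exactly what allows the two prior results, stated relative to different graphs, to be invoked consistently, and once it is observed the corollary follows immediately.
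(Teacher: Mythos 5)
Your argument is correct and is essentially the paper's own (implicit) proof: the corollary is meant to follow immediately from the preceding proposition applied to $(\Sigma_{V},\intgraph_{V})$ and the preceding unnumbered corollary applied with $\Sigma'=\Sigma_{V}$, and your observation that $\intgraph$-distinguishability and $\intgraph_{V}$-distinguishability agree on $\Sigma_{V}$ because $\intgraph_{V}$ is vertex-induced is exactly the compatibility the paper takes for granted. Note one citation slip: the pairwise-distinguishable-subshift result you invoke is the \emph{unnumbered} corollary immediately preceding this one, not Corollary~\ref{cor:sft}, which is the Perron-eigenvalue bound stated \emph{after} (and deduced from) the present corollary, so citing it as you do would make the argument circular.
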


In the following corollaries we assume that $\Sigma$ is a shift of finite type with transition graph $\transgraph$ and intersection graph $\intgraph$.

\begin{cor}\label{cor:sft}
Let $V\subset V(\intgraph)$ be an independent set and let $\transgraph'\intgraph'$ be the vertex-induced subgraph of $\transgraph\intgraph$ with vertex set $V$. Then $\olenti(\Sigma)\ge\log(\lambda)$ where $\lambda$ is the Perron eigenvalue of $A_{\transgraph'}$.
\end{cor}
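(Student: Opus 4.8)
The plan is to chain together the two corollaries that immediately precede this statement, so the main work is already done for me and this final corollary is essentially a matter of unpacking the notation in the subshift-of-finite-type setting. Corollary~\ref{cor:indep} tells me that for an independent set $V\subset V(\intgraph)$ we have $\olenti(\Sigma)\ge\ent(\Sigma_V)$, where $\Sigma_V=\{(a_0,a_1,\ldots)\in\Sigma:a_i\in V\}$. So everything reduces to identifying $\ent(\Sigma_V)$ with $\log(\lambda)$, where $\lambda$ is the Perron eigenvalue of the transition matrix $A_{\transgraph'}$ of the vertex-induced subgraph $\transgraph'\intgraph'$ on vertex set $V$.

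First I would verify that $\Sigma_V$ is itself a subshift of finite type whose transition graph is exactly $\transgraph'$, the vertex-induced subgraph of $\transgraph$ on $V$. This is the substantive step: I must check that the words in $\Sigma$ all of whose symbols lie in $V$ are precisely the walks in the transition graph $\transgraph$ that never leave $V$. Since $\Sigma$ is the vertex shift of $\transgraph$, a sequence $(a_0,a_1,\ldots)$ lies in $\Sigma$ iff there is a $\transgraph$-edge from $a_i$ to $a_{i+1}$ for every $i$; requiring in addition that every $a_i\in V$ is exactly the condition that the walk stays inside the vertex-induced subgraph $\transgraph'$, whose edges are by definition all $\transgraph$-edges with both endpoints in $V$. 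Hence $\Sigma_V$ is the vertex shift of $\transgraph'$, and its transition matrix is the principal submatrix $A_{\transgraph'}$ of $A_{\transgraph}$ obtained by restricting to the rows and columns indexed by $V$.

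Once that identification is in place, I would invoke the standard formula for the entropy of a subshift of finite type recalled earlier in the excerpt, namely $\ent(\Sigma_V)=\log(\lambda)$ where $\lambda$ is the Perron eigenvalue of $A_{\transgraph'}$. Combining this with Corollary~\ref{cor:indep} gives $\olenti(\Sigma)\ge\ent(\Sigma_V)=\log(\lambda)$, which is the claim. The intersection-graph part $\intgraph'$ of the merged graph $\transgraph'\intgraph'$ plays no role in the entropy computation precisely because $V$ was chosen independent, so $\intgraph'$ has no edges and $\Sigma_V$ carries no overlaps.

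The only point requiring genuine care is the boundary-behavior subtlety in the first step: one must confirm that restricting the \emph{alphabet} to $V$ and restricting the \emph{transition graph} to the induced subgraph produce the same shift space, with no spurious gain or loss of admissible words. There is also a minor degenerate case to note---if $A_{\transgraph'}$ is the empty or nilpotent matrix (for instance if $\transgraph'$ has no edges or no cycles), then $\lambda=0$ and $\log(\lambda)$ is understood as $-\infty$, so the inequality holds vacuously; I would remark on this only in passing. Apart from that, the result follows formally from the preceding corollary and the Perron-eigenvalue entropy formula, so I expect no real obstacle.
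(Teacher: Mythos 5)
Your proof is correct and follows exactly the route the paper intends: the paper states Corollary~\ref{cor:sft} without proof, treating it as immediate from Corollary~\ref{cor:indep} together with the Perron-eigenvalue entropy formula for subshifts of finite type recalled in Section~\ref{sec:entropyshiftwithoverlap}, which is precisely your chain of reasoning. Your added care in checking that $\Sigma_V$ is the vertex shift of the induced subgraph $\transgraph'$, and your remark on the degenerate case $\lambda=0$, are sensible elaborations of the same argument rather than a different approach.
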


\begin{cor}\label{cor:complete}
Suppose $\transgraph$ is a complete digraph. Then $\olenti(\Sigma)\ge\log(\ind(\intgraph))$. 
\end{cor}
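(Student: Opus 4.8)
The plan is to specialize Corollary \ref{cor:sft} to a maximum independent set of the intersection graph. First I would choose $V\subset V(\intgraph)$ to be an independent set of maximum cardinality, so that $|V|=\ind(\intgraph)$; such a set exists by the definition of the independence number. Applying Corollary \ref{cor:sft} to this $V$ immediately gives $\olenti(\Sigma)\ge\log(\lambda)$, where $\lambda$ is the Perron eigenvalue of the transition matrix $A_{\transgraph'}$ of the vertex-induced subgraph $\transgraph'$ on $V$. So the entire task reduces to identifying $\transgraph'$ and computing $\lambda$.

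The next step is precisely that computation. Since $\transgraph$ is a complete digraph, every ordered pair of vertices in $V$ (including each vertex paired with itself) is joined by a $\transgraph$-edge, so the vertex-induced subgraph $\transgraph'$ is again a complete digraph, now on $|V|=\ind(\intgraph)$ vertices. Its transition matrix $A_{\transgraph'}$ is therefore the $k\times k$ all-ones matrix with $k=\ind(\intgraph)$. A routine check shows the all-ones matrix has Perron eigenvalue equal to its size $k$, with eigenvector $(1,\ldots,1)^{\mathsf T}$, so $\lambda=\ind(\intgraph)$ and hence $\olenti(\Sigma)\ge\log(\ind(\intgraph))$.

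There is essentially no hard step here: the statement is a direct corollary of Corollary \ref{cor:sft}. The only point demanding care is the convention for ``complete digraph''—specifically whether it includes a self-loop at each vertex. The clean conclusion requires the completeness to include self-loops, so that $\Sigma$ is the full shift on $n$ symbols and $A_{\transgraph'}$ really is the all-ones matrix; I would state this convention explicitly. Equivalently, one can bypass Corollary \ref{cor:sft} altogether and invoke Corollary \ref{cor:indep} directly, observing that $\Sigma_{V}$ is the full shift on the $\ind(\intgraph)$ symbols of $V$, whose ordinary entropy $\ent(\Sigma_{V})$ is $\log(\ind(\intgraph))$.
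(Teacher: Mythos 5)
Your proof is correct and follows exactly the paper's own argument: take a maximum independent set $V$, note the vertex-induced subgraph $\transgraph'$ is a complete digraph on $\ind(\intgraph)$ vertices, and apply Corollary \ref{cor:sft}. The paper simply leaves the Perron-eigenvalue computation for the all-ones matrix implicit, whereas you spell it out (and rightly flag the self-loop convention, which the paper tacitly assumes).
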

\begin{proof}
Let $V$ be a maximum cardinality independent subset of $\intgraph$. The vertex-induced subgraph of $\transgraph$ with vertex set $V$ is a complete digraph with $\ind(\intgraph)$ vertices. By Corollary \ref{cor:sft}, $\olenti(\Sigma)\ge\log(\ind(\intgraph))$.
\end{proof}

\section{Decomposition into irreducible and primitive components}
\label{sec:decomp}

We begin this section with some basic definitions from graph theory and linear algebra. (See \cite[ch.~4]{LM} or any non-introductory linear algebra text for more details.)

 Let $\graph$ be a directed graph. A {\em vertex path} $(a_0,a_1,\ldots,a_{l-1})$ of length $l$ is a sequence of vertices in $\graph$ such that there is an edge from vertex $a_i$ to $a_{i+1}$ for each $i$, $0\le i\le l-2$. In what follows, we assume that $\graph$ has $n$ vertices and no parallel edges and that every vertex has at least one edge leaving it and one edge entering it.  Then the adjacency matrix for $\graph$, $A_{\graph}$, is an $n\times n$ (0,1)-matrix with no row or column containing all zeros. Notice that for $k>1$ the matrix $A_{\graph}^{k}$ need not be a $(0,1)$-matrix. However, if the $(i,j)$-entry of $A_{\graph}^{k}$ is nonzero, then there exists a vertex path of length $k+1$ from vertex $i$ to vertex $j$ (actually, the value of the $(i,j)$ gives the number of such vertex paths).

A graph is \emph{irreducible} provided there is a vertex path between any two vertices, and a matrix is \emph{irreducible} if it is the adjacency matrix for an irreducible graph. It is always possible to decompose a graph into its irreducible components; the corresponding matrix is (after rearranging the order of the vertices, or equivalently, conjugating by a permutation matrix) in block-triangular form with each of the blocks on the diagonal the adjacency matrix for one of the irreducible components of the graph. 

An irreducible graph $\graph$ may exhibit cyclic behavior. In particular for each vertex $i$, $\Per(i)=\gcd\{n:\text{there is a vertex path of length }n+1\text{ from }i\text{ to itself}\}$ exists and is the same value for all $i$. This common value is called the \emph{period} of $\graph$, denoted $\Per(\graph)$. Define the \emph{period} of an irreducible $(0,1)$-matrix $A$, $\Per(A)$, to be the period of the associated graph. If the period of the graph or the matrix is 1, then we call it \emph{primitive}. If $A$ is an irreducible matrix with $p=\Per(A)$, then (after reordering vertices) $A^{p}$ is a block triangular matrix with primitive matrices along the diagonal. Finally, if $A$ is a primitive matrix, then $A^{k}$ is eventually positive.  The \emph{index of primitivity} of $A$, $\gamma(A)$, is the least integer $k$ such that $A^k>0$.  

Now suppose our shift space with overlaps has an associated graph $\transgraph\intgraph$; that is, we consider $(\Sigma,\intgraph)$ where $\Sigma$ is a subshift of finite type with transition graph $\transgraph$. We make the standing assumption that every vertex of $\transgraph$ has at least one edge leaving it and one edge entering it; if not, we can remove the ``stranded'' vertices without affecting the dynamics. Recall that the graph-theoretical notions of irreducibility and primitivity have dynamical interpretations for the shift: $\transgraph$ is irreducible iff $\Sigma$ is \emph{topologically transitive}, and $\transgraph$ is primitive iff $\Sigma$ is \emph{topologically mixing}.

\begin{thm}\label{thm:primitive}
Suppose $\transgraph$ is primitive with index of primitivity $\gamma$. Then $\displaystyle\olenti(\Sigma)\ge\frac{\log(\ind(\intgraph))}{\gamma}$.
\end{thm}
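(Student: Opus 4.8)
The plan is to generalize the proof of Corollary~\ref{cor:complete}. In the complete-digraph case every sequence of letters is allowed, so one can freely fill each position with a letter from a maximum independent set $V\subset V(\intgraph)$ and obtain $\ind(\intgraph)^{n}$ mutually distinguishable words of length $n$. When $\transgraph$ is merely primitive we cannot place an independent-set letter at \emph{every} position, but primitivity lets us place one every $\gamma$ steps and splice in a connecting path in between; the rate of free choice then drops from $1$ to $1/\gamma$ per symbol, which is exactly the source of the $1/\gamma$ factor in the statement.

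First I would fix a maximum independent set $V\subset V(\intgraph)$, so $|V|=\ind(\intgraph)$. Since $\transgraph$ is primitive with index of primitivity $\gamma$, the matrix $A_{\transgraph}^{\gamma}$ is strictly positive; equivalently, for every ordered pair of vertices $u,w$ there is a vertex path of length $\gamma+1$ (a walk using exactly $\gamma$ edges) from $u$ to $w$ in $\transgraph$. For each such pair I would select and fix one such connecting walk once and for all.

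Next, for each $m\ge 1$ I would build a family of words of length $m\gamma+1$ as follows. Choose ``anchor'' letters $v_{0},v_{1},\dots,v_{m}\in V$ to occupy the positions $0,\gamma,2\gamma,\dots,m\gamma$, and between the anchor at position $k\gamma$ and the anchor at position $(k+1)\gamma$ insert the fixed connecting walk from $v_{k}$ to $v_{k+1}$. Each resulting word is a vertex path in $\transgraph$, hence lies in $\blocks{m\gamma+1}(\Sigma)$, and the construction produces exactly one word for each of the $\ind(\intgraph)^{m+1}$ choices of anchor sequence $(v_{0},\dots,v_{m})$. The key point is that these words are pairwise distinguishable: if two anchor sequences differ, say $v_{k}\ne v_{k}'$, then at position $k\gamma$ the two words carry distinct letters of $V$, and since $V$ is independent there is no $\intgraph$-edge between them, so the words fail to be indistinguishable at that position. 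Hence the family is an $(m\gamma+1)$-separated set and $|\sep{m\gamma+1}(\Sigma,\intgraph)|\ge \ind(\intgraph)^{m+1}$.

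Finally I would pass to the limit. Restricting the $\limsup$ defining $\olenti(\Sigma)$ to lengths of the form $n=m\gamma+1$ gives
\[
\olenti(\Sigma)\ge \limsup_{m\to\infty}\frac{1}{m\gamma+1}\log\ind(\intgraph)^{m+1}
=\limsup_{m\to\infty}\frac{m+1}{m\gamma+1}\,\log\ind(\intgraph)
=\frac{\log\ind(\intgraph)}{\gamma},
\]
which is the desired bound. I expect the only real subtlety to be the distinguishability argument: one must verify that the arbitrary filler letters on the non-anchor positions cannot accidentally render two words indistinguishable. This is precisely why the anchors are drawn from an independent set and spaced so that a mismatch at a single anchor already certifies distinguishability, regardless of how the connecting walks are chosen.
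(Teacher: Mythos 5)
Your proof is correct and is essentially the paper's own argument: both proofs place letters of a maximum independent set at positions spaced $\gamma$ apart, use the positivity of $A_{\transgraph}^{\gamma}$ to supply connecting paths between consecutive anchors, and observe that the resulting $\ind(\intgraph)^{m+1}$ (resp.\ $\ind(\intgraph)^{n}$) words are separated because a mismatch at a single anchor position, where the letters are distinct and non-adjacent in $\intgraph$, already certifies distinguishability. The only difference is bookkeeping---your words have length $m\gamma+1$ and end in an anchor, while the paper's have length $n\gamma$ and end in filler letters---and this does not affect the limiting bound $\frac{\log(\ind(\intgraph))}{\gamma}$.
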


\begin{proof}

Let $\{b_1,\ldots,b_{\ind(\intgraph)}\}$ be an independent set of vertices.  Since $A^\gamma >0 $, for any pair $b_i$, $b_j$, there is a word $(b_{i},a_2,a_3,\ldots,a_{\gamma},b_j)$ in $\Sigma$.  More generally, for any $n$ and any $b_{i_1},\ldots,b_{i_n}$, there is a word $$(b_{i_1},a_2,\ldots, a_{\gamma},b_{i_2},a_{\gamma+2},\ldots, a_{2\gamma},\ldots ,b_{i_n},a_{(n-1)\gamma+2},\ldots ,a_{n\gamma})$$ in $\Sigma$. The collection of all $\ind(\intgraph)^n$ such words is $n\gamma$-separated, so we have that $|\sep{n\gamma}(\Sigma,\intgraph)| \ge \ind(\intgraph)^n$, and thus
$$\olenti(\Sigma)= \limsup_{n\to\infty}\frac{1}{n}\log |\sep{n}(\Sigma,\intgraph)| \ge  \limsup_{n\to\infty}\frac{1}{n\gamma}\log |\sep{n\gamma}(\Sigma,\intgraph)| \ge \frac{\log(\ind(\intgraph))}{\gamma}.$$
\end{proof}

\begin{cor}
If $\transgraph$ is primitive with $n$ vertices, then $\displaystyle \olenti(\Sigma) \ge \frac{\log (\ind(\intgraph))}{n^{2}-2n+2}$.
\end{cor}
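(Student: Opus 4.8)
The plan is to combine Theorem~\ref{thm:primitive} with a universal upper bound on the index of primitivity that depends only on the matrix size. Theorem~\ref{thm:primitive} already gives $\olenti(\Sigma) \ge \log(\ind(\intgraph))/\gamma$, where $\gamma = \gamma(A_{\transgraph})$ is the index of primitivity, so it suffices to establish a bound of the form $\gamma \le n^2 - 2n + 2$ whenever $A_{\transgraph}$ is a primitive $n\times n$ matrix, and then chain the two inequalities.

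The key step is to invoke the classical \emph{Wielandt bound}: for any primitive nonnegative $n\times n$ matrix $A$, the index of primitivity satisfies $\gamma(A) \le (n-1)^2 + 1 = n^2 - 2n + 2$. Since the alphabet has $n$ symbols, $A_{\transgraph}$ is exactly an $n\times n$ $(0,1)$-matrix, so this applies directly with no additional hypotheses to check. This is a standard (if nontrivial) result in the theory of nonnegative matrices and can be cited rather than reproved.

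Finally, I would assemble the pieces using a one-line monotonicity argument. Because any single vertex already forms an independent set, $\ind(\intgraph) \ge 1$, and hence $\log(\ind(\intgraph)) \ge 0$. Dividing this nonnegative quantity by the larger denominator $n^2 - 2n + 2 \ge \gamma$ can only decrease it, so
\[
\olenti(\Sigma) \ge \frac{\log(\ind(\intgraph))}{\gamma} \ge \frac{\log(\ind(\intgraph))}{n^2 - 2n + 2},
\]
which is the claimed bound.

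I expect no genuine obstacle here: the only substantive ingredient is the Wielandt bound itself, and once it is in hand everything reduces to the monotonicity of $c/\gamma$ in $\gamma$ for $c \ge 0$. The one point requiring care is recalling the exact form $(n-1)^2 + 1$ of Wielandt's constant (rather than, say, a weaker polynomial bound), since it is precisely this sharp value that yields the stated denominator $n^2 - 2n + 2$.
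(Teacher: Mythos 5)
Your proposal is correct and is essentially identical to the paper's proof: the paper also deduces the corollary by combining Theorem~\ref{thm:primitive} with the Wielandt bound $\gamma(\transgraph) \le n^2 - 2n + 2$, cited from the literature. Your additional remark that $\log(\ind(\intgraph)) \ge 0$ justifies the monotonicity step is a nice touch of care that the paper leaves implicit.
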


\begin{proof}
This follows from the fact that $\gamma(\transgraph) \le n^{2}-2n+2$ (\cite[Thm.~4.14]{BP}).
\end{proof}

In general $\transgraph$ is not primitive. In this case we break down $\transgraph$ into its irreducible components. Each irreducible component has some period $p$, and can thus be decomposed into $p$ primitive components. We then apply Theorem \ref{thm:primitive} to obtain the following corollary. 

\begin{cor}
Suppose $\transgraph'\subset\transgraph$ is a primitive component with index of primitivity $\gamma$ that resides in an irreducible component of period $p$ and that $\intgraph'$ is the associated intersection graph. Then \[\olenti(\Sigma)\ge\frac{\log(\ind(\intgraph'))}{p\gamma}.\]
\end{cor}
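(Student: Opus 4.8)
The plan is to follow the proof of Theorem~\ref{thm:primitive} almost verbatim, with the index of primitivity $\gamma$ replaced by the ``effective'' path length $p\gamma$ measured in the original graph $\transgraph$. The one genuinely new ingredient is translating the primitivity of $\transgraph'$---whose edges are, in effect, length-$p$ paths of the ambient irreducible component---back into a reachability statement in $\transgraph$ itself.

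First I would record the reachability fact. Let $A$ be the adjacency matrix of the irreducible component of period $p$ containing $\transgraph'$. By the structure theorem recalled above, after reordering vertices $A^{p}$ is block diagonal with primitive blocks on the diagonal, one of which is $A_{\transgraph'}$. Since the index of primitivity of $\transgraph'$ is $\gamma$ we have $(A_{\transgraph'})^{\gamma}>0$, and this block is exactly the block of $(A^{p})^{\gamma}=A^{p\gamma}$ indexed by $V(\transgraph')$. Hence the $(i,j)$-entry of $A^{p\gamma}$ is nonzero for every pair $b_{i},b_{j}\in V(\transgraph')$, so there is a vertex path of length $p\gamma+1$ from $b_{i}$ to $b_{j}$ in the original graph, and therefore a word $(b_{i},a_{2},\ldots,a_{p\gamma},b_{j})\in\Sigma$. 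Note that the intermediate symbols $a_{2},\ldots,a_{p\gamma}$ need not lie in $V(\transgraph')$; only the endpoints do.

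Next, let $\{b_{1},\ldots,b_{\ind(\intgraph')}\}$ be a maximum independent set of $\intgraph'$. Because $\intgraph'$ is the vertex-induced subgraph of $\intgraph$ on $V(\transgraph')$, this set is also independent in $\intgraph$, so any two distinct $b$'s are distinguishable. Concatenating the reachability words as in Theorem~\ref{thm:primitive}, for every choice $(b_{i_{1}},\ldots,b_{i_{n}})$ there is a word in $\Sigma$ of length $np\gamma$ carrying $b_{i_{k}}$ at position $(k-1)p\gamma+1$. The $\ind(\intgraph')^{n}$ such words form an $np\gamma$-separated set, since two distinct choices differ at some marked position, where by independence there is no $\intgraph$-edge between the two symbols. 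Thus $|\sep{np\gamma}(\Sigma,\intgraph)|\ge\ind(\intgraph')^{n}$, and passing to the $\limsup$ along the subsequence $m=np\gamma$ exactly as in Theorem~\ref{thm:primitive} yields $\olenti(\Sigma)\ge \log(\ind(\intgraph'))/(p\gamma)$.

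The main obstacle is the bookkeeping in the first step: one must verify that a single edge of the abstracted graph $\transgraph'$ corresponds to exactly $p$ edges of $\transgraph$, so that $\gamma$ edges of $\transgraph'$ give $p\gamma$ edges of $\transgraph$, and that the separation argument relies only on the endpoints $b_{i_{k}}\in V(\transgraph')$ rather than on the intermediate symbols $a_{j}$. Once this translation is in hand, the counting and $\limsup$ manipulations are identical to those already carried out in Theorem~\ref{thm:primitive}.
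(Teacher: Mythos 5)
Your proof is correct and takes essentially the same route as the paper, which derives this corollary by decomposing the irreducible component into primitive pieces and then ``applying Theorem~\ref{thm:primitive}''---precisely the concatenation-of-marked-words argument you reproduce, with the block structure of $A^{p}$ supplying reachability between vertices of $V(\transgraph')$ in $p\gamma$ steps. Your write-up merely makes explicit the translation the paper leaves implicit (an edge of $\transgraph'$ is a length-$p$ path in $\transgraph$, independence in the vertex-induced subgraph $\intgraph'$ gives independence in $\intgraph$, and only the marked positions matter for separation), so there is no gap.
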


\begin{cor}
If there are two vertices of $\transgraph$ that are in the same primitive component and are not joined by an $\intgraph$-edge, then $(\Sigma,\intgraph)$ has positive entropy.
\end{cor}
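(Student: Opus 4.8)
The plan is to obtain this as an immediate consequence of the preceding corollary, the one bounding $\olenti(\Sigma)$ below by $\log(\ind(\intgraph'))/(p\gamma)$ for a primitive component $\transgraph'$ sitting inside an irreducible component of period $p$. Let $u$ and $v$ be the two vertices furnished by the hypothesis: they lie in a common primitive component $\transgraph'$, and there is no $\intgraph$-edge joining them. First I would let $\intgraph'$ denote the intersection graph associated to $\transgraph'$, i.e.\ the vertex-induced subgraph of $\intgraph$ on the vertex set of $\transgraph'$. Since both $u$ and $v$ are vertices of $\transgraph'$, they are vertices of $\intgraph'$, and because $\intgraph'$ is \emph{vertex-induced} it inherits exactly those $\intgraph$-edges whose endpoints both lie in its vertex set. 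As there is no $\intgraph$-edge between $u$ and $v$, there is likewise no $\intgraph'$-edge between them, so $\{u,v\}$ is an independent set in $\intgraph'$.

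It follows that $\ind(\intgraph')\ge 2$. Next I would record that $p$ and $\gamma$ are finite positive integers: $\gamma=\gamma(\transgraph')$ exists because $\transgraph'$ is primitive (its adjacency matrix is eventually positive), and $p=\Per$ of the ambient irreducible component is a finite positive integer as well. Applying the preceding corollary and then monotonicity of $\log$ gives $\olenti(\Sigma)\ge \log(\ind(\intgraph'))/(p\gamma)\ge (\log 2)/(p\gamma)>0$, which is exactly the assertion that $(\Sigma,\intgraph)$ has positive entropy.

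There is no genuinely hard step here; the argument is a one-line specialization of the previous corollary to the case $\ind(\intgraph')\ge 2$. The only point deserving care—and the only place an argument could slip—is the translation of the phrase ``not joined by an $\intgraph$-edge'' into ``independent pair in the vertex-induced subgraph $\intgraph'$.'' This relies on the definition of a vertex-induced subgraph (so that no edge is spuriously introduced between $u$ and $v$ when passing to $\intgraph'$) together with the observation that $u$ and $v$ both survive into $\intgraph'$ because they belong to the primitive component $\transgraph'$. Once that is in place, the positivity of $p\gamma$ and the bound $\ind(\intgraph')\ge 2$ close the argument immediately.
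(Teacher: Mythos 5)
Your proof is correct and matches the paper's intent exactly: the paper states this corollary without proof, treating it (as you do) as an immediate specialization of the preceding corollary, since the two non-adjacent vertices give $\ind(\intgraph')\ge 2$ and hence $\olenti(\Sigma)\ge\log(2)/(p\gamma)>0$. Your care about the vertex-induced subgraph inheriting no spurious edges is a reasonable point to make explicit, but there is no divergence from the paper's approach.
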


\begin{ex}\label{ex:bigexample}
Let $U\subset\R^{2}$ be an open set and $f:U\to\R^{2}$ be a continuous function that maps the sets $N_{1},\ldots,N_{11}\subset U$ as shown in Figure \ref{fig:bigexample}. The associated graph $\transgraph\intgraph$ is shown on the right.

\begin{figure}[ht]
\includegraphics[width=5in]{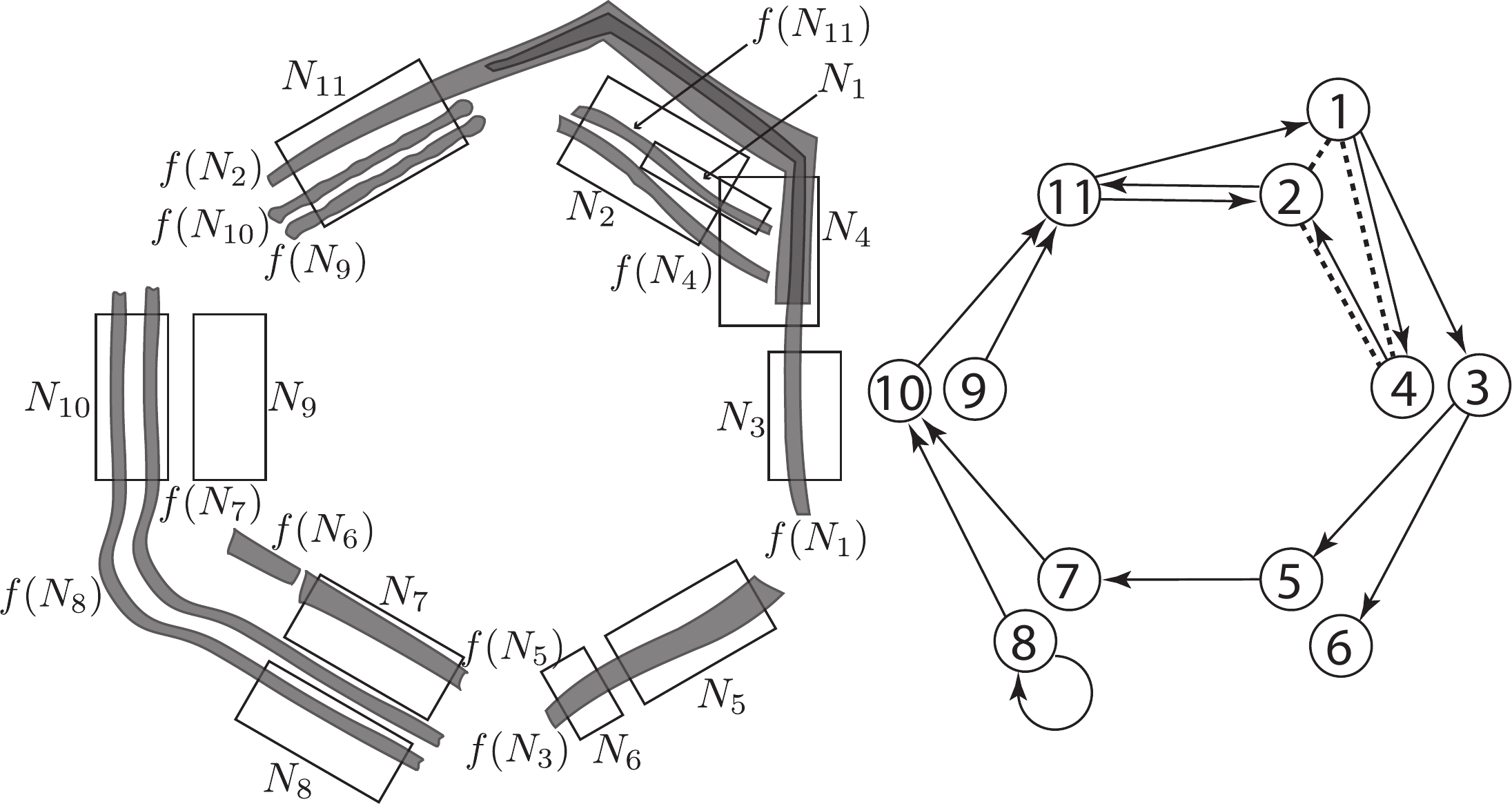}
\caption{}\label{fig:bigexample}
\end{figure}

First we remove vertices $6$ and  $9$ since they lack an outgoing and an incoming edge in $\transgraph$, respectively. (Note: if we also removed two of the vertices $1$, $2$, and $4$ to obtain an independent set, then we could apply Corollary \ref{cor:sft}, but the corresponding shift space would have zero entropy.) The resulting transition graph has two irreducible components, with vertex sets $\{8\}$ and $\{1,2,3,4,5,7,9,10,11\}$. The second of these has period 2 and it decomposes into the primitive components  $\{3,4,7,11\}$ and $\{1,2,5,10\}$. It is straightforward to show that both of these have index of primitivity $4$ and that the first of these is an independent set. Thus we conclude that $\ent(f)\ge\olenti(\Sigma)\ge \frac{1}{2\cdot 4}\log(4)\approx0.173$. We will return to this example later.
\end{ex}

\begin{ex}\label{ex:doubling}
Let $S^1$ be the circle, viewed as $\R/\Z$ and let $f:S^{1}\to S^{1}$ be a map that is $C^{0}$-close to the doubling map ($x\mapsto 2x$).  Let $N_1=[-0.1,0.35]$, $N_2=[0.15,0.6]$, $N_3=[0.4,0.85]$, and $N_4=[0.65,1.1]$.  The transition matrix and the graph $\transgraph\intgraph$ are shown in Figure \ref{fig:jimsexample}.

\begin{figure}[ht]
\includegraphics[width=4in]{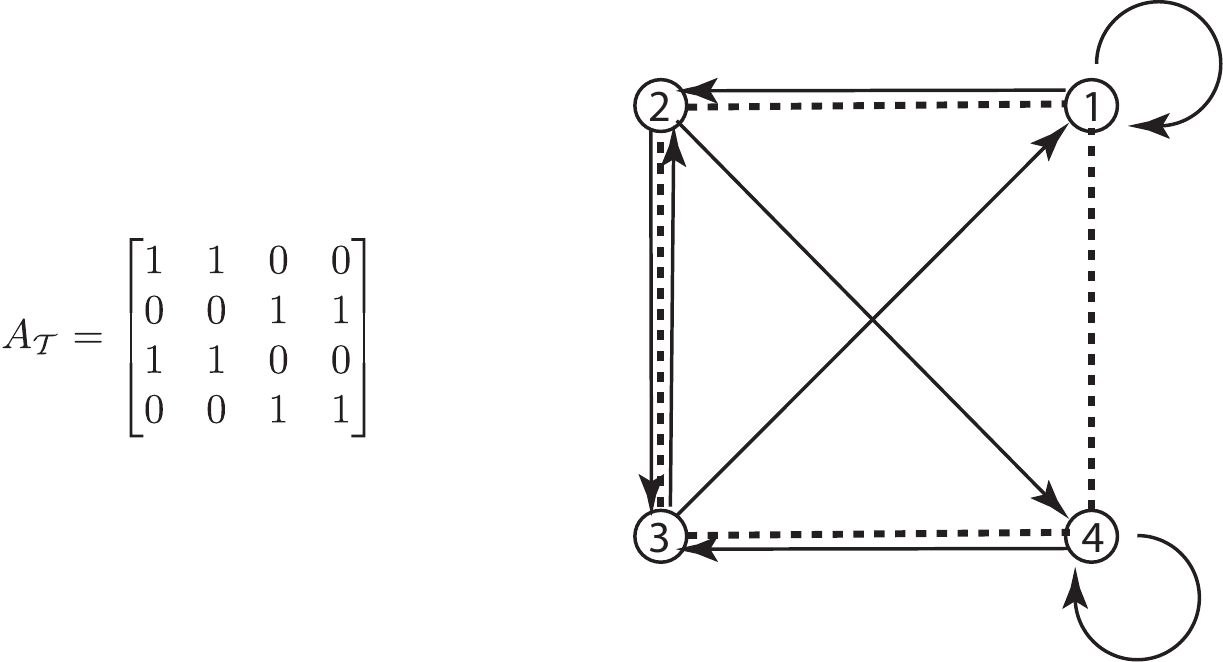}
\caption{}\label{fig:jimsexample}
\end{figure}
Since $A_{\transgraph}^2$ consists of all 1's, $A_{\transgraph}$ is irreducible with $\Per(A_{\transgraph})=1$ and primitive with $\gamma(A_{\transgraph})=2$.  The maximal independent sets in $\intgraph$ are $\{1,3\}$ and $\{2,4\}$, so $\ind(\intgraph)=2$.  Thus $\ent(f)\ge\olenti(\Sigma)\ge\frac{1}{2}\log(2)$, which is half the entropy of the doubling map. We will return to this example.
\end{ex}

\section{Sofic shifts and $\intgraph$-component shifts}
\label{sec:sofic}

Vertices that are adjacent in $\intgraph$ are indistinguishable. This is a problem for our symbolic dynamics, since different symbols may not correspond to different points. One way of working around this problem is to give neighboring vertices the same label. In other words, give each vertex in a connected component of $\intgraph$ the same label. 

Suppose $\intgraph$ has connected components $A_{1},\ldots,A_{r}$. Then create the shift space $\Sigma_{\intgraph}$ on the alphabet $\{A_{1},\ldots,A_{r}\}$ as follows. The element $(A_{i_{0}},A_{i_{1}},\ldots)\in\Sigma_{\intgraph}$ if and only if there is an element $(a_{0},a_{1},\ldots)\in\Sigma$ such that $a_{k}\in A_{i_{k}}$ for all $k$. We call $\Sigma_{\intgraph}$ the \emph{$\intgraph$-component shift space} for $(\Sigma,\intgraph)$. 

\begin{thm}\label{thm:componentshift}
Let $\Sigma_{\intgraph}$ be the $\intgraph$-component shift space associated to $(\Sigma,\intgraph)$. Then $\olenti(\Sigma)\ge \ent(\Sigma_{\intgraph})$.
\end{thm}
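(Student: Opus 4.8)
The plan is to show directly that for every length $n$, a maximum $n$-separated set in $(\Sigma,\intgraph)$ is at least as large as the set of $n$-blocks of $\Sigma_{\intgraph}$; the entropy inequality then follows by taking $\frac1n\log$ and passing to the limit.

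First I would set up the symbol-to-symbol map $\pi$ sending each letter $a\in\{1,\ldots,n\}$ to the connected component $A$ of $\intgraph$ containing it, extended letter by letter to words of $\Sigma$. By the definition of the $\intgraph$-component shift space, $\Sigma_{\intgraph}=\pi(\Sigma)$, so $\Sigma_{\intgraph}$ is the image of a shift space under a $1$-block code and is therefore itself a shift space; in particular $\ent(\Sigma_{\intgraph})=\lim_{n\to\infty}\frac1n\log|\blocks{n}(\Sigma_{\intgraph})|$, with the limit existing.

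The key observation is that $\pi$ never distinguishes indistinguishable words: if two length-$n$ words $(a_0,\ldots,a_{n-1})$ and $(b_0,\ldots,b_{n-1})$ of $\Sigma$ are indistinguishable, then for each $i$ there is an $\intgraph$-edge between $a_i$ and $b_i$, so $a_i$ and $b_i$ lie in a common connected component; hence $\pi$ carries both words to the same block of $\Sigma_{\intgraph}$. Taking the contrapositive, any two words of $\Sigma$ mapping to \emph{distinct} blocks of $\Sigma_{\intgraph}$ are distinguishable.

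Then for each block $w\in\blocks{n}(\Sigma_{\intgraph})$ I would select one preimage word in $\Sigma$ (which exists by the definition of $\Sigma_{\intgraph}$), obtaining a set $B$ with $|B|=|\blocks{n}(\Sigma_{\intgraph})|$. By the key observation, distinct elements of $B$ project to distinct blocks and so are pairwise distinguishable; thus $B$ is $n$-separated and $|\sep{n}(\Sigma,\intgraph)|\ge|B|=|\blocks{n}(\Sigma_{\intgraph})|$. Dividing by $n$, taking logarithms, and using that a $\limsup$ dominates an existing limit yields $\olenti(\Sigma)\ge\ent(\Sigma_{\intgraph})$. The argument is short, and I do not expect a genuine obstacle; the one point warranting care is the observation that an $\intgraph$-edge forces membership in a common connected component, since this is precisely what makes projection to distinct component-blocks sufficient for distinguishability, without invoking the finer overlap structure.
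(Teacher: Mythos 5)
Your proposal is correct and follows essentially the same route as the paper: choose one $\Sigma$-preimage for each block of $\blocks{n}(\Sigma_{\intgraph})$, observe that these representatives form an $n$-separated set, and conclude $|\sep{n}(\Sigma,\intgraph)|\ge|\blocks{n}(\Sigma_{\intgraph})|$. Your only addition is to spell out explicitly (via the projection $\pi$ and its contrapositive) why the chosen representatives are pairwise distinguishable, a step the paper's proof asserts without elaboration.
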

\begin{proof}
Let $\blocks{m}(\Sigma_{\intgraph})$ be the set of words of length $m$ in $\Sigma_{\intgraph}$. For each element $(A_{i_{0}},\ldots,A_{i_{m-1}})\in\blocks{m}(\Sigma_{\intgraph})$, pick one word $(a_{0},\ldots,a_{m-1})$ in $\Sigma$ such that $a_{k}\in A_{i_{k}}$ for $k=0,\ldots,m-1$. The collection of these words in $\Sigma$ are $m$-separated. However, it may not be a maximal $m$-separated set. Thus $|\sep{m}(\Sigma,\intgraph)|\ge|\blocks{m}(\Sigma_{\intgraph})|$, and hence $\olenti(\Sigma)\ge \ent(\Sigma_{\intgraph})$.
\end{proof}

A set of vertices in a nondirected graph form a \emph{clique} if every pair of vertices in the set are joined by an edge.

\begin{prop}
\label{prop:cliques}
If the vertices in each connected component of $\intgraph$ form a clique, then $\olenti(\Sigma)=\ent(\Sigma_{\intgraph})$.
\end{prop}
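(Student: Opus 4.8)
The plan is to prove the reverse of the inequality supplied by Theorem~\ref{thm:componentshift}. That theorem already gives $\olenti(\Sigma)\ge\ent(\Sigma_{\intgraph})$ by exhibiting, for each $n$, an $n$-separated set of size $|\blocks{n}(\Sigma_{\intgraph})|$; so it suffices to establish the matching upper bound $|\sep{n}(\Sigma,\intgraph)|\le|\blocks{n}(\Sigma_{\intgraph})|$ for every $n$, after which the two quantities coincide for all $n$ and the entropies are forced to be equal.

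The crux is the observation that the clique hypothesis collapses the generally non-transitive indistinguishability relation into a genuine equivalence relation. Write $\pi\colon\Sigma\to\Sigma_{\intgraph}$ for the map that replaces each symbol by the label of the connected component of $\intgraph$ containing it. Since every component is a clique, two symbols $a_i$ and $b_i$ are either equal or joined by an $\intgraph$-edge exactly when they lie in the same component, that is, exactly when $\pi$ assigns them the same label. (Here I use the reflexive reading of the definition, under which a symbol counts as indistinguishable from itself even though the simple graph $\intgraph$ has no loops; this is the reading forced by the fixed-point example of the introduction, where $(1,1,1,\dots)$ and $(1,2,1,2,\dots)$ are to be treated as indistinguishable.) Consequently two words of length $n$ are indistinguishable if and only if they have the same image under $\pi$, so indistinguishability becomes reflexive, symmetric, and transitive, with equivalence classes equal to the fibers of $\pi$.

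Granting this, the upper bound is immediate: if $B$ is any $n$-separated set, then no two of its words share a $\pi$-image, so $\pi|_B$ is injective and $|B|\le|\blocks{n}(\Sigma_{\intgraph})|$. Taking $B=\sep{n}(\Sigma,\intgraph)$ gives the desired inequality, and combining it with Theorem~\ref{thm:componentshift} yields $|\sep{n}(\Sigma,\intgraph)|=|\blocks{n}(\Sigma_{\intgraph})|$ for all $n$, whence $\olenti(\Sigma)=\ent(\Sigma_{\intgraph})$. The only substantive step is the middle one: recognizing that, although indistinguishability fails to be transitive in general (as stressed in the introduction), the clique condition forces ``joined by an $\intgraph$-edge'' to coincide with ``in the same connected component,'' and the latter is transitive. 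Everything after that is a one-line counting argument, so I anticipate no real obstacle beyond stating the reflexivity convention cleanly.
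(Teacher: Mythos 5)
Your proof is correct and follows essentially the same route as the paper's: both establish the lower bound via Theorem~\ref{thm:componentshift} and the upper bound by showing the component-labeling map is injective on any $n$-separated set, using the clique hypothesis to conclude that words with equal labels are indistinguishable. Your explicit remark that the clique condition makes (the reflexive closure of) indistinguishability an equivalence relation, and your handling of the loop/reflexivity convention, are just cleaner framings of the same counting argument the paper gives.
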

\begin{proof}
By Theorem \ref{thm:componentshift}, all we must prove is that $\olenti(\Sigma)\le\ent(\Sigma_{\intgraph})$. Let $\sep{m}(\Sigma,\intgraph)$ be a maximal $m$-separated set. Consider the function $\psi:\sep{m}(\Sigma,\intgraph)\to\blocks{m}(\Sigma_{\intgraph})$ given by $\psi(a_{0},a_{1},\ldots,a_{m-1})=(A_{i_{0}},A_{i_{1}},\ldots,A_{i_{m-1}})$ where $a_{k}\in A_{i_{k}}$ for all $k$. We will prove that this function is injective. Suppose $\psi(a_{0},a_{1},\ldots,a_{m-1})=\psi(b_{0},b_{1},\ldots,b_{m-1})$ for some $(a_{0},a_{1},\ldots,a_{m-1})$, $(b_{0},b_{1},\ldots,b_{m-1})\in\sep{m}(\Sigma,\intgraph)$. Then $a_{k},b_{k}\in A_{i_{k}}$ for all $k$. But the subgraphs of $\intgraph$ with vertex sets $A_{i_{k}}$ are cliques, so $a_{k}$ and $b_{k}$ are the same or are indistinguishable. Thus $(a_{0},a_{1},\ldots,a_{m-1})$ and $(b_{0},b_{1},\ldots,b_{m-1})$ are indistinguishable. Since $\sep{m}(\Sigma,\intgraph)$ is $m$-separated, $(a_{0},a_{1},\ldots,a_{m-1})=(b_{0},b_{1},\ldots,b_{m-1})$, and $\psi$ is injective. Because $\psi$ is an injective function between finite sets, we conclude that $|\sep{m}(\Sigma,\intgraph)|\le|\blocks{m}(\Sigma_{\intgraph})|$. Hence  $\olenti(\Sigma)\le\ent(\Sigma_{\intgraph})$.
\end{proof}

An important special case occurs when $\Sigma$ is a subshift of finite type. Then $\Sigma_{\intgraph}$ is a sofic shift (see \cite[Ch.~3]{LM} for more about sofic shifts and the terms in this paragraph). If $(\Sigma,\intgraph)$ is represented by the graph $\transgraph\intgraph$, then $\Sigma_{\intgraph}$ is the \emph{sofic shift} represented by the directed graph obtained from $\transgraph$ by giving all vertices in a connected component of $\intgraph$ the same label. If this graph is \emph{right-resolving} (that is, if every edge from a given vertex points to a different label), then the entropy of the sofic shift is simply the log of the Perron eigenvalue of the adjacency matrix for $\transgraph$. If the graph is not right-resolving then there is a mechanical procedure for transforming it  into a right-resolving graph representing the same sofic shift. In particular, the entropy is easy to compute.

\begin{ex}\label{ex:goldenmean}
Consider the graph $\transgraph\intgraph$ shown on the left in Figure \ref{fig:goldenmeanshift}. If we give vertices 1 and 3 the same label we obtain the ``golden mean'' shift shown on the right (a shift that is well-known to be sofic but not of finite type). By Proposition \ref{prop:cliques} the entropies of the two are equal, and since the graph for the sofic shift is right-resolving, we can easily compute the entropy from the adjacency matrix: $\olenti(\Sigma)=\ent(\Sigma_{\intgraph})=\log(\frac{1}{2}(1+\sqrt{5}))$.
\begin{figure}[ht]
\includegraphics[width=4in]{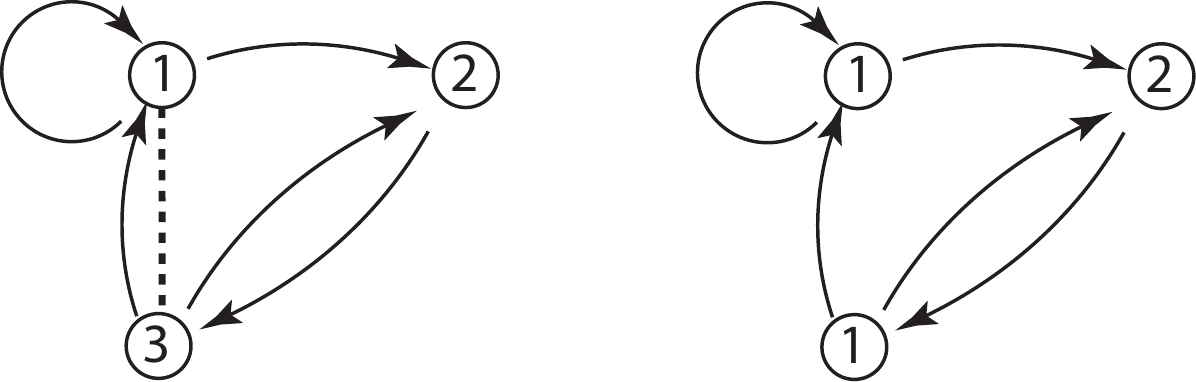}\\
\caption{}\label{fig:goldenmeanshift}
\end{figure}
\end{ex}

\begin{ex}\label{ex:bigexample2}
Consider the sofic shift associated to the shift space with overlaps in Example \ref{ex:bigexample} (with vertices 6, 8, and 9 removed). That is, we give vertices 1, 2, and 4 the same label (the graph on the left in Figure \ref{fig:bigsofic}). The graph is not right-resolving, but after being put in right-resolving form (the graph on the right) we find that $\olenti(\Sigma)\ge\log\big(\frac{1}{6}(108+12\sqrt{69})^{1/3}+2(108+12\sqrt{69})^{-1/3}\big))\approx .281$, which is a larger lower bound than the one obtained in Example \ref{ex:bigexample}. 
\begin{figure}[ht]
\includegraphics[width=4.5in]{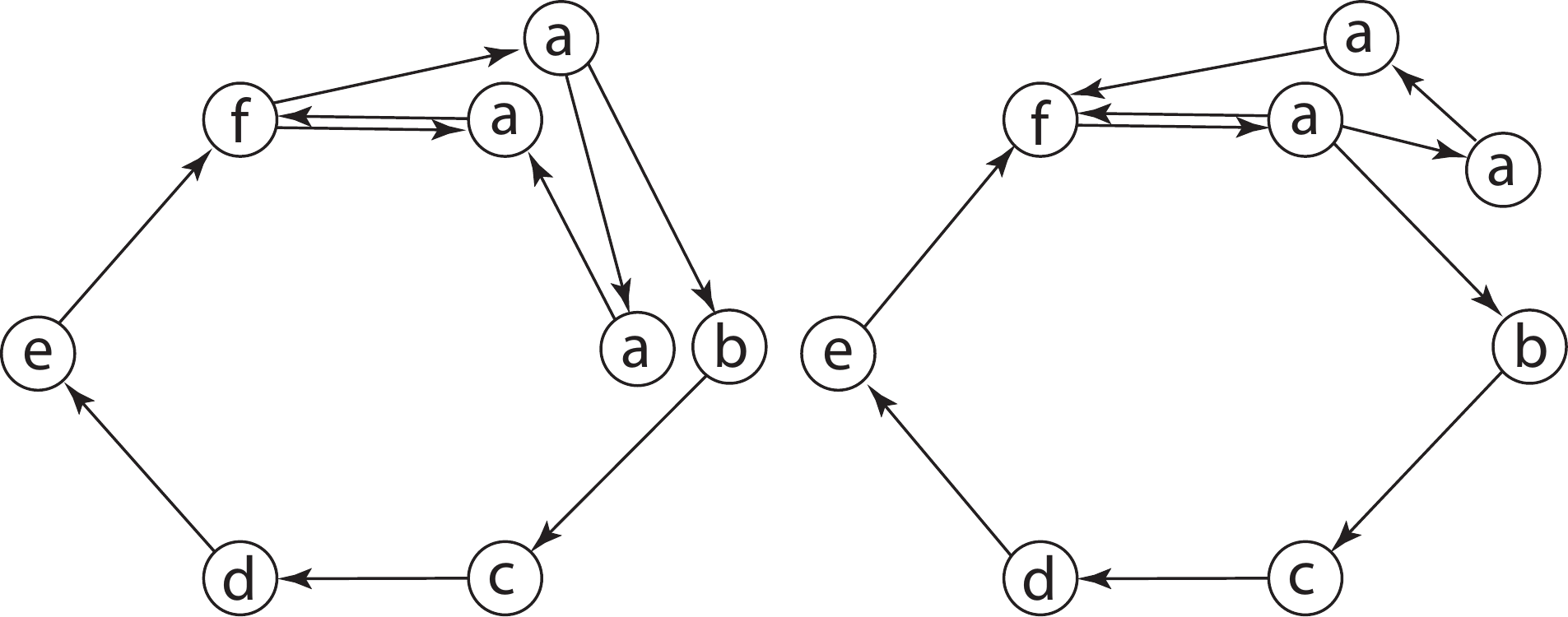}
\caption{}\label{fig:bigsofic}
\end{figure}

\end{ex}

\begin{ex}\label{ex:doubling2}
On the other hand, the sofic shift corresponding to the doubling map of Example \ref{ex:doubling} has only one symbol, so we obtain a lower bound of zero for the entropy.
\end{ex}

In some circumstances we could obtain a larger lower bound for the entropy by using a combination of removing vertices (as described in Section \ref{sec:indep}) and relabeling (as described in this section) than we could using either of these techniques alone. We could remove a few vertices to increase the number of connected components of $\intgraph$, then relabel  to obtain an $\intgraph$-component shift with high entropy.

\section{Higher shifts} \label{sec:higher}
In this section we examine higher shifts. This is a well-known way of altering the transition graph for a subshift of finite type to obtain a new graph which generates a topologically conjugate subshift of finite type. The higher vertex shift allows us to subdivide the set of vertices, thus presumably giving us a greater number of disjoint vertices. We show how to implement this notion for shift spaces with overlap generated from a graph $\transgraph\intgraph$. We refer the reader to \cite[Sect.~2.3]{LM} for details on higher shifts for edge-labeled graphs; the constructions can be modified easily for vertex-labeled graphs, which is what we need. 

For $m\ge1$ we define the \emph{$m$th higher vertex graph} $\transgraph_{[m]}$ to have vertex set equal to the collection of all vertex paths of length $m$ in $\transgraph$, with an edge from a vertex $(i_0,\dots,i_{m-1})$ to a vertex $(i_1,\dots,i_{m-1},i_m)$ provided there is an edge in $\transgraph$ from $i_{m-1}$ to $i_m$.  Note that the first higher vertex shift $\transgraph_{[1]}$ is simply $\transgraph$.

The intersection graph $\intgraph$ induces an intersection graph $\intgraph_{[m]}$ for the vertices of $\transgraph_{[m]}$.  There is an (undirected) edge between the vertices $(i_0,\dots,i_{m-1})$ and $(j_0,\dots,j_{m-1})$ if the words are indistinguishable; that is, if there is an edge in $\intgraph$ between $i_k$ and $j_k$ for all $k$.

\begin{thm}\label{thm:higherentropy}
If  $\transgraph_{[m]}\intgraph_{[m]}$ is the graph associated to the $m$th higher shift of $(\Sigma_{\transgraph},\intgraph$), then $\olent{\intgraph_{[m]}}(\Sigma_{\transgraph_{[m]}})=\olenti(\Sigma_{\transgraph})$.
\end{thm}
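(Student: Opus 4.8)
The plan is to show that the $m$th higher shift construction preserves the entropy of the shift with overlaps, by establishing that $n$-separated sets in one space correspond, up to a bounded shift in the length parameter, to $n$-separated sets in the other. The key conceptual point is that passing to the higher shift is a relabeling that refines the alphabet but does not change which infinite (or sufficiently long finite) words are genuinely distinguishable. First I would recall the standard sliding-block correspondence: there is a bijection between words of length $n+m-1$ in $\Sigma_{\transgraph}$ and words of length $n$ in $\Sigma_{\transgraph_{[m]}}$, given by sending $(a_0,\ldots,a_{n+m-2})$ to the sequence of length-$m$ vertex paths $\big((a_0,\ldots,a_{m-1}),(a_1,\ldots,a_m),\ldots,(a_{n-1},\ldots,a_{n+m-2})\big)$. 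This is the content of the topological conjugacy between $\Sigma_{\transgraph}$ and $\Sigma_{\transgraph_{[m]}}$ recalled from \cite[Sect.~2.3]{LM}.

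The heart of the argument is to check that this bijection respects indistinguishability exactly as encoded by $\intgraph$ and $\intgraph_{[m]}$. I would argue as follows. Two length-$n$ words in $\Sigma_{\transgraph_{[m]}}$ are indistinguishable (under $\intgraph_{[m]}$) if and only if their corresponding coordinates---which are length-$m$ vertex paths---are joined by an $\intgraph_{[m]}$-edge at every position. By the definition of $\intgraph_{[m]}$ given just before the theorem, this means that for the underlying words $(a_0,\ldots,a_{n+m-2})$ and $(b_0,\ldots,b_{n+m-2})$ in $\Sigma_{\transgraph}$, there is an $\intgraph$-edge between $a_k$ and $b_k$ for all $k$; that is, the two underlying words are indistinguishable under $\intgraph$. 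The only subtlety to verify carefully is that the overlapping windows do not impose extra or redundant constraints: the coordinate $a_k$ appears in several consecutive length-$m$ windows, but the edge condition it must satisfy is the same single condition ``$\intgraph$-edge between $a_k$ and $b_k$'' each time, so the conjunction over all windows is precisely the conjunction over all positions $k$ from $0$ to $n+m-2$. Hence a set of length-$n$ words in $\Sigma_{\transgraph_{[m]}}$ is $n$-separated (under $\intgraph_{[m]}$) exactly when the corresponding set of length-$(n+m-1)$ words in $\Sigma_{\transgraph}$ is $(n+m-1)$-separated (under $\intgraph$).

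From this correspondence I would conclude that $|\sep{n}(\Sigma_{\transgraph_{[m]}},\intgraph_{[m]})| = |\sep{n+m-1}(\Sigma_{\transgraph},\intgraph)|$ for every $n$, since the bijection carries maximal separated sets to maximal separated sets. Taking $\limsup_{n\to\infty}\frac1n\log$ of both sides and noting that the fixed additive shift $m-1$ in the length does not affect the exponential growth rate (formally, $\frac1n\log|\sep{n+m-1}|=\frac{n+m-1}{n}\cdot\frac{1}{n+m-1}\log|\sep{n+m-1}|$ and the prefactor tends to $1$), I obtain $\olent{\intgraph_{[m]}}(\Sigma_{\transgraph_{[m]}})=\olenti(\Sigma_{\transgraph})$.

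I expect the main obstacle to be the bookkeeping in the second paragraph: making fully precise that the indistinguishability relation on higher-shift words translates into exactly the pointwise $\intgraph$-edge condition on the original words, with no gap from the overlap of windows. Everything else is either the cited conjugacy or the elementary observation that an additive offset in the block length is invisible to the exponential growth rate.
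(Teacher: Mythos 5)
Your proposal is correct and follows essentially the same route as the paper: the sliding-block bijection $\psi_n$ between $\blocks{n+m-1}(\Sigma_{\transgraph})$ and $\blocks{n}(\Sigma_{\transgraph_{[m]}})$, the observation that it preserves (in)distinguishability, the resulting equality $|\sep{n}(\Sigma_{\transgraph_{[m]}},\intgraph_{[m]})| = |\sep{n+m-1}(\Sigma_{\transgraph},\intgraph)|$, and the fact that the additive offset $m-1$ is invisible to the growth rate. If anything, you are more careful than the paper on the one point it asserts without argument, namely that the overlapping windows impose exactly the pointwise $\intgraph$-edge conditions and nothing more.
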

\begin{proof}
Let $m\ge 1$ be fixed. For each $n$, there exists a natural bijective function $\psi_{n}:\blocks{n+m-1}(\Sigma_{\transgraph})\to\blocks{n}(\Sigma_{\transgraph_{[m]}})$. Specifically, $\psi_{n}(b_{0},\ldots,b_{n+m-2})=(w_{0},\ldots,w_{n-1})$, where $w_{i}=(b_{i},\ldots,b_{m+i-1})$. Moreover, $\psi_{n}(b_{0},\ldots,b_{n-1})$ and $\psi_{n}(c_{0},\ldots,c_{n-1})$ are distinguishable if and only if $(b_{0},\ldots,b_{n-1})$ and $(c_{0},\ldots,c_{n-1})$ are distinguishable. Thus,  $|\sep{n+m-1}(\Sigma_{\transgraph},\intgraph)|=|\sep{n}(\Sigma_{\transgraph_{[m]}},\intgraph_{[m]})|$, and hence
\begin{align*}
\olent{\intgraph_{[m]}}(\Sigma_{\transgraph_{[m]}})&= \limsup_{n\to\infty}\frac{1}{n}\log |\sep{n}(\Sigma_{\transgraph_{[m]}},\intgraph_{[m]})|\\
&= \limsup_{n\to\infty}\frac{1}{n}\log |\sep{n+m-1}(\Sigma_{\transgraph},\intgraph)|\\
&=\olenti(\Sigma_{\transgraph}).
\end{align*}  
\end{proof}

\begin{prop}
$\ind(\intgraph_{[m]})\ge\ind(\intgraph)$.
\end{prop}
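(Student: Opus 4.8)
The plan is to exhibit an explicit injection from an independent set of $\intgraph$ into an independent set of $\intgraph_{[m]}$, which immediately yields the inequality on independence numbers. First I would take a maximum independent set $V=\{v_1,\ldots,v_k\}\subset V(\intgraph)$, so $k=\ind(\intgraph)$ and no two vertices $v_a,v_b$ are joined by an $\intgraph$-edge. The natural candidate vertices of $\transgraph_{[m]}$ to use are the constant length-$m$ paths $(v,v,\ldots,v)$ for $v\in V$, but I must be careful: a vertex of $\transgraph_{[m]}$ is a \emph{vertex path} in $\transgraph$, so $(v,\ldots,v)$ is a legitimate vertex only when there is a $\transgraph$-edge from $v$ to itself. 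Since the paper makes the standing assumption that every vertex has an incoming and outgoing edge but not necessarily a self-loop, the constant path need not exist, so this is the step that requires the most care.

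To avoid the self-loop issue, I would instead build, for each $v_a\in V$, an arbitrary but fixed length-$m$ vertex path $P_a=(i_0^a,\ldots,i_{m-1}^a)$ in $\transgraph$ whose \emph{first} coordinate is $v_a$; such a path exists for any $m$ because every vertex has an outgoing edge (so we can extend forward). The key point is then to check that the resulting set $\{P_1,\ldots,P_k\}$ is independent in $\intgraph_{[m]}$. Recall that $(i_0,\ldots,i_{m-1})$ and $(j_0,\ldots,j_{m-1})$ are $\intgraph_{[m]}$-adjacent iff there is an $\intgraph$-edge between $i_k$ and $j_k$ for \emph{every} coordinate $k$. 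In particular, adjacency requires an $\intgraph$-edge in the $0$th coordinate, namely between the first entries $v_a$ and $v_b$. But these first entries lie in the independent set $V$, so no such edge exists when $a\ne b$; hence $P_a$ and $P_b$ are nonadjacent in $\intgraph_{[m]}$, and the map $v_a\mapsto P_a$ is an injection (distinct since the $0$th coordinates are distinct) onto an independent set of size $k$.

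The main obstacle is therefore not the counting but the existence of valid higher-graph vertices carrying the independent labels in a single fixed coordinate; once I observe that $\intgraph_{[m]}$-adjacency demands an $\intgraph$-edge in \emph{all} coordinates (so failing in even one coordinate suffices for nonadjacency), the argument closes cleanly. I would conclude $\ind(\intgraph_{[m]})\ge k=\ind(\intgraph)$. As a remark, one could alternatively derive this more softly from Theorem~\ref{thm:higherentropy} together with the independence-based lower bound of Corollary~\ref{cor:indep}, but that route gives an entropy comparison rather than the clean combinatorial inequality stated, so the direct injection is preferable.
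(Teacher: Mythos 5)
Your proof is correct and takes essentially the same approach as the paper: extend each vertex of an independent set of $\intgraph$ to an arbitrary length-$m$ vertex path in $\transgraph$ beginning with that vertex, and observe that the lack of an $\intgraph$-edge in the first coordinate already forces nonadjacency in $\intgraph_{[m]}$. Your care about constant paths not existing without self-loops is precisely why the paper's proof also appeals to the standing assumption that every vertex has an outgoing edge and uses ``any word of length $m$'' starting at each independent vertex.
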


\begin{proof}
(Recall that we are assuming that each vertex has at least one edge leaving it.) If $\{b_1,\ldots,b_{\ind(\intgraph)}\}$ is an independent set of vertices of $\intgraph$, and $w_i=(b_i,a_{i_1},\ldots,a_{i_{m-1}})$ is, for each $b_i$, any word of length $m$, then $\{w_1,\ldots,w_{\ind(\intgraph)}\}$ is an independent set of vertices of $\intgraph_{[m]}$.

\end{proof}

\begin{prop}\label{prop:higherexp}
Assume that $\transgraph$ has at least two vertices.  If $\transgraph$ is primitive, then so is $\transgraph_{[m]}$, and $\gamma(\transgraph_{[m]})=\gamma(\transgraph)-1+m$.
(If $\transgraph$ has only one vertex, then $\transgraph_{[m]}$ is isomorphic to $\transgraph$.)
\end{prop}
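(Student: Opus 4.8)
The plan is to prove the two claims separately, both relying on the correspondence between vertex paths in $\transgraph$ and vertices of $\transgraph_{[m]}$, together with the fact that an edge-path in $\transgraph_{[m]}$ of length $\ell$ corresponds to a vertex path in $\transgraph$ of length $\ell+m-1$. First I would establish primitivity of $\transgraph_{[m]}$, which is equivalent to showing that for some $k$ there is a path of length exactly $k$ in $\transgraph_{[m]}$ between every ordered pair of vertices. I would translate this back into $\transgraph$: a path of length $k$ in $\transgraph_{[m]}$ from $(i_0,\dots,i_{m-1})$ to $(j_0,\dots,j_{m-1})$ is exactly a vertex path in $\transgraph$ that begins with the block $i_0,\dots,i_{m-1}$, ends with the block $j_0,\dots,j_{m-1}$, and has total length $k+m-1$. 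So the requirement is that $i_{m-1}$ connect to $j_0$ by a path whose length is forced by the bookkeeping. Since $\transgraph$ is primitive, $A_{\transgraph}^{\gamma}>0$, so for $\gamma(\transgraph)=\gamma$ there is a path of length exactly $\gamma+1$ (as a vertex path) between any two vertices; I would use this to stitch the source block, a connecting path from $i_{m-1}$ to $j_0$, and the target block together.

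The heart of the argument, and the step I expect to be the main obstacle, is pinning down the \emph{exact} index of primitivity rather than merely a bound. I would argue that $A_{\transgraph_{[m]}}^{k}>0$ iff every pair of length-$m$ blocks is joined in $\transgraph$ by a vertex path of the prescribed total length. Overlaps between the end of the source block and the start of the target block make the naive counting subtle, so the cleanest route is to count in terms of the number of \emph{edges} traversed: the source vertex of $\transgraph_{[m]}$ already carries $m-1$ edges of $\transgraph$ inside it, so $k$ steps in $\transgraph_{[m]}$ append $k$ further edges, giving a $\transgraph$-path with $k+m-1$ edges, i.e. $k+m$ vertices. The condition that such a path exists for \emph{all} ordered pairs of $\transgraph_{[m]}$-vertices reduces, after fixing the two end blocks, to the existence of a $\transgraph$-path of a controlled number of edges between the last symbol of the source block and the first symbol of the target block. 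Because $\transgraph$ has at least two vertices and is primitive, one checks that the threshold $k$ at which this becomes simultaneously achievable for all pairs is exactly $\gamma(\transgraph)-1+m$: the ``$+m$'' accounts for the extra edges carried by and consumed within the blocks, and the ``$-1$'' corrects the off-by-one between edge-count and the path-length convention $\gamma(A)=\min\{k:A^k>0\}$. I would verify minimality by exhibiting a pair of blocks for which no shorter path exists, using a pair of symbols realizing $\gamma(\transgraph)$ in $\transgraph$ itself.

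To make the minimality clean I would argue in two directions. For the upper bound $\gamma(\transgraph_{[m]})\le\gamma(\transgraph)-1+m$, I take any two blocks and concatenate the source block, a $\transgraph$-path of the appropriate length realizing positivity of $A_{\transgraph}^{\gamma}$ from $i_{m-1}$ to $j_0$, and the target block, checking the total edge count matches $k=\gamma-1+m$. For the lower bound, I would pick symbols $u,v$ with $(A_{\transgraph}^{\gamma-1})_{uv}=0$ (such a pair exists by minimality of $\gamma$), place $u$ as the terminal symbol of the source block and $v$ as the initial symbol of the target block, and verify that no $\transgraph_{[m]}$-path of length $\gamma-2+m$ can connect the two blocks, so $A_{\transgraph_{[m]}}^{\gamma-2+m}$ has a zero entry. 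The parenthetical single-vertex case is immediate: if $\transgraph$ has one vertex there is a unique vertex path of each length, so $\transgraph_{[m]}$ again has a single vertex with a self-loop and is isomorphic to $\transgraph$.
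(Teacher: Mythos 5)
Your proposal is correct and follows essentially the same route as the paper's proof: the correspondence between walks in $\transgraph_{[m]}$ and walks in $\transgraph$ beginning and ending with the prescribed blocks, concatenation through a connecting path realizing $A_{\transgraph}^{\gamma}>0$ for the upper bound, and a pair of blocks ending in $u$ and beginning with $v$ chosen to witness minimality for the lower bound. The only wrinkle is the case $\gamma(\transgraph)=1$, which the paper handles as a separate case: there ``minimality of $\gamma$'' gives you nothing (it says nothing about $A^{0}$), and the existence of your pair $u\neq v$ with $(A_{\transgraph}^{\gamma-1})_{uv}=(I)_{uv}=0$ rests instead on the hypothesis that $\transgraph$ has at least two vertices, after which your overlap analysis (the block ending in $u$ and the block starting with $v$ would have to share the symbol in position $m-1$) does go through.
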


\begin{proof}
A graph is primitive with exponent $\gamma$ if and only if from any vertex to any other there is a vertex path of length $\gamma+1$, and $\gamma$ is the smallest number with this property.  
Observe that $(i_{m-1},k_1,\dots,k_p,j_0)$ is a vertex path in $\transgraph$ (of length $p+2$, with $p\ge 0$) if and only if 
\[((i_0,\dots,i_{m-1}), (i_1,\dots,i_{m-1},k_1),\dots, (k_p,j_0,\dots,j_{m-2}), (j_0,\dots,j_{m-1}))\]
is a vertex path in $\transgraph_{[m]}$ (of length $p+m+1$) for any vertices $(i_0,\dots,i_{m-1})$ and $(j_0,\dots,j_{m-1})$.  Between any two vertices of $\transgraph$ there is a vertex path of length $p+2=\gamma(\transgraph)$, so between any two vertices of $\transgraph_{[m]}$ there is a vertex path of length $p+m+1=\gamma(\transgraph)-1+m$.  Thus $\gamma(\transgraph_{[m]})\le \gamma(\transgraph)-1+m$.

To prove the opposite inequality, first assume that $\gamma(\transgraph)>1$.  Then there exist two vertices of $\transgraph$ such that there is no vertex path from the first to the second of length $\gamma(\transgraph)$, and thus two vertices of $\transgraph_{[m]}$ with no vertex path of length $\gamma(\transgraph)-1+m$ between them, so $\gamma(\transgraph_{[m]})\ge \gamma(\transgraph)-1+m$.

Finally, if $\gamma(\transgraph)=1$, then any vertex of $\transgraph$ can follow any other vertex.  For $i\ne j$, it is clear that the shortest vertex path in $\transgraph_{[m]}$ from $(i,\dots,i)$ to $(j,\dots,j)$ has length $m+1$, so $\gamma(\transgraph_{[m]}) \ge m = \gamma(\transgraph)-1+m$.

\end{proof}

\begin{ex}\label{ex:doubling3}
We return to Example~\ref{ex:doubling}.  The vertices of $\transgraph_{[2]}$ are $(1,1)$, $(1,2)$, $(2,3)$, $(2,4)$, $(3,1)$, $(3,2)$, $(4,3)$, and $(4,4)$---one for each edge (or vertex path of length 2) in $\transgraph$. There are nine edges in $\intgraph_{[2]}$. They can be seen in the graph $\transgraph_{[2]}\intgraph_{[2]}$ in Figure \ref{fig:higherexample2}.
\begin{figure}[ht]
\includegraphics[width=4in]{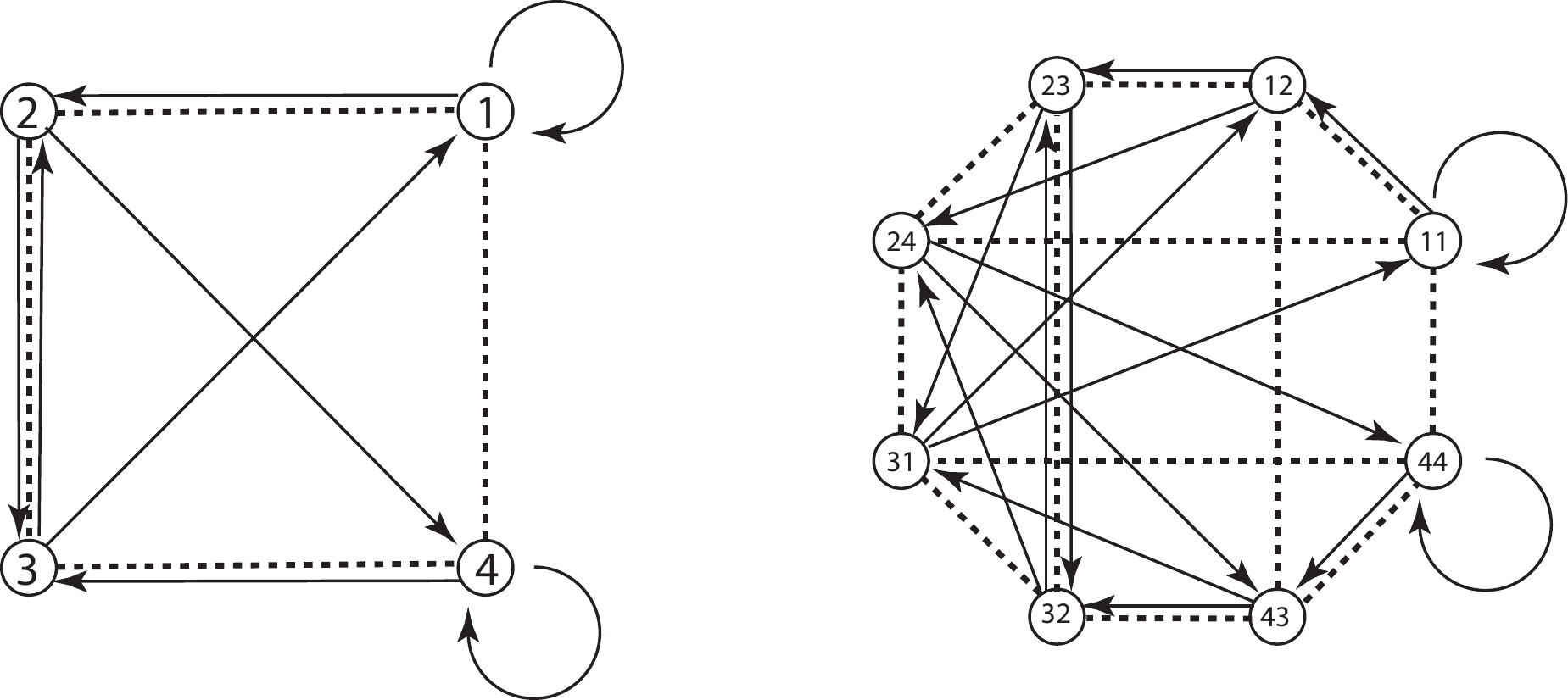}
\caption{}\label{fig:higherexample2}
\end{figure}

Since $\gamma(\transgraph)=2$, Proposition~\ref{prop:higherexp} says that $\gamma(\transgraph_{[2]})=3$, as we can check by observing that $A_{[2]}^2$ has zero entries while $A_{[2]}^3$ does not (where $A_{[2]}$ is the adjacency matrix for $\transgraph_{[2]}$).

Notice that the set of vertices $\{(1,1), (2,3), (3,1), (4,3)\}$ is a maximum cardinality independent set in $\intgraph_{[2]}$, so $\ind(\intgraph_{[2]})=4$. Theorems~\ref{thm:entropybound}, \ref{thm:primitive}, and \ref{thm:higherentropy} tells us that \[\ent(f) \ge \olenti(\Sigma_{\transgraph})=\olent{\intgraph_{[2]}}(\Sigma_{\transgraph_{[2]}})\ge\frac{\log 4}3 = \frac{2\log2}{3},\] which is greater than $\frac{1}{2}\log2$, the lower bound we obtained in Example~\ref{ex:doubling} using $\transgraph$. In this particular example we see that $\gamma(\transgraph_{[m]})=m+1$ and $\ind(\intgraph_{[m]})=2^{m}$. So \[\ent(f)\ge \frac{\log2^{m}}{m+1}=\frac{m\log2}{m+1}\] for all $m$. This implies that $\ent(f)\ge\log2$, the entropy of the doubling map.
\end{ex}

\section{Entropy as a limit}
\label{sec:entropytheorem}

In Example \ref{ex:doubling3} we see that we obtain a very good entropy bound by looking at the sequence of higher shifts. This is true for more than this one example. In this section we show that when the shift space with overlaps is given by a graph $\transgraph\intgraph$, the sequence of higher shifts is closely related to the entropy. In particular, if $\transgraph$ is primitive, the entropy can be expressed as a limit. 

As in Section \ref{sec:higher}, let $\transgraph_{[m]}\intgraph_{[m]}$ denote the graph for the $m$th higher shift. In this case $\blocks{m}(\Sigma)=V(\transgraph_{[m]}\intgraph_{[m]})$. Moreover, an $m$-separated set $B$ in $\Sigma$ corresponds to an independent set in $\intgraph_{[m]}$, and hence $|\sep{m}(\Sigma,\intgraph)|=\ind(\intgraph_{[m]})$.  Thus $\olenti(\Sigma)$ is the growth rate of the independence number of $\intgraph_{[m]}$ as $m$ goes to infinity.

\begin{thm}\label{thm:limit}
If $(\Sigma,\intgraph)$ is the shift space with overlaps associated to the graph $\transgraph\intgraph$, then 
\[\olenti(\Sigma)=\limsup_{m\to \infty}\frac{\log(\ind(\intgraph_{[m]}))}{m}.\]  If $\transgraph$ is primitive, then
\[\olenti(\Sigma) =  \lim_{m\to \infty}\frac{\log(\ind(\intgraph_{[m]}))}{\gamma(\transgraph_{[m]})} = \lim_{m\to\infty}\frac{\log(\ind(\intgraph_{[m]}))}{m}.\]
(In particular, both of these limits exist.)
\end{thm}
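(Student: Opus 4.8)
The plan is to handle the two claims in order: the first identity comes for free from the remark preceding the theorem, and the primitive case reduces to a single convergence statement that I would establish by a Fekete-type superadditivity argument.

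For the first identity I would simply invoke the observation made just before the statement, that $|\sep{m}(\Sigma,\intgraph)|=\ind(\intgraph_{[m]})$. Substituting this into the definition $\olenti(\Sigma)=\limsup_{m\to\infty}\frac1m\log|\sep{m}(\Sigma,\intgraph)|$ yields $\olenti(\Sigma)=\limsup_{m\to\infty}\frac{\log(\ind(\intgraph_{[m]}))}{m}$ with no further work.

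For the primitive case, set $a_m=\log(\ind(\intgraph_{[m]}))$ and $\gamma=\gamma(\transgraph)$. Proposition~\ref{prop:higherexp} gives $\gamma(\transgraph_{[m]})=m+\gamma-1$ (the one-vertex case being trivial, with everything zero), so $\frac{a_m}{\gamma(\transgraph_{[m]})}=\frac{a_m}{m}\cdot\frac{m}{m+\gamma-1}$ and the two limits coincide as soon as either exists. Hence the whole problem reduces to showing $\lim_{m\to\infty}a_m/m$ exists (and so equals the $\limsup$, i.e.\ $\olenti(\Sigma)$). To get this I would prove two facts. First, a superadditive gluing inequality: given a maximal $p$-separated set $B_p$ and a maximal $q$-separated set $B_q$, for each pair $(u,v)\in B_p\times B_q$ I use $A^\gamma>0$ to insert a connecting vertex path of length $\gamma-1$ from the last symbol of $u$ to the first symbol of $v$, producing a valid word in $\Sigma$ of length $p+q+\gamma-1$. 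Distinct pairs give distinguishable words (if $u\ne u'$ then already in the first $p$ coordinates there is a position with no $\intgraph$-edge, and similarly for $v$), so this collection is $(p+q+\gamma-1)$-separated and injectively indexed by $B_p\times B_q$; thus $a_{p+q+\gamma-1}\ge a_p+a_q$. Second, monotonicity $a_{m+1}\ge a_m$, obtained by appending one outgoing edge to every word of a maximal $m$-separated set (using the standing assumption that each vertex has an outgoing edge). Iterating the gluing inequality gives $a_{km+(k-1)c}\ge k\,a_m$ with $c=\gamma-1$, and since $a_m\ge 0$ is nondecreasing, for fixed $m$ and large $N$ (with $k=\lfloor N/(m+c)\rfloor$) I get $a_N\ge a_{km+(k-1)c}\ge k\,a_m$, hence $\liminf_N a_N/N\ge a_m/(m+c)$. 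Taking the supremum over $m$ and comparing with the trivial bound $a_N/N=\frac{a_N}{N+c}\cdot\frac{N+c}{N}$ forces $\limsup_N a_N/N\le \sup_m a_m/(m+c)\le\liminf_N a_N/N$, so the limit exists.

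The main obstacle is the superadditive gluing step: unlike the full-shift setting, words of $\Sigma$ cannot be concatenated freely, and it is precisely primitivity (through $A^\gamma>0$) that lets me bridge arbitrary words by a connector of the fixed length $\gamma-1$ while preserving distinguishability. The price is the constant offset $c=\gamma-1$, which forces the Fekete argument to be run in the ``almost superadditive'' form above rather than the textbook version; the monotonicity of $a_m$ is what cleanly disposes of the boundary indices. Converting between the two stated limits via Proposition~\ref{prop:higherexp} is then routine.
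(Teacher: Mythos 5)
Your proof is correct and follows essentially the same route as the paper: the first identity from $|\sep{m}(\Sigma,\intgraph)|=\ind(\intgraph_{[m]})$, and for the primitive case a concatenation argument using $A^{\gamma}>0$ to glue separated words with connectors of length $\gamma-1$, which is exactly the paper's use of the Theorem~\ref{thm:primitive} construction to get $\ind(\intgraph_{[M]})\ge\ind(\intgraph_{[m]})^{\lfloor M/\gamma(\transgraph_{[m]})\rfloor}$ and hence $\liminf_M \frac{\log(\ind(\intgraph_{[M]}))}{M}\ge\frac{\log(\ind(\intgraph_{[m]}))}{\gamma(\transgraph_{[m]})}$ for every $m$. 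Your explicit Fekete-style packaging (pairwise superadditivity plus monotonicity via appended outgoing edges) is a slightly more formal rendering of the same inequality, not a different method.
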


\begin{proof}
The limit superior equality follows from the definition of entropy and the discussion preceding this theorem. Suppose $\transgraph$ is primitive with index of primitivity $\gamma(\transgraph)$. Then $\transgraph_{[m]}$ is also primitive and $\gamma(\transgraph_{[m]})=\gamma(\transgraph)+m-1$. So it follows that 

\[\olenti(\Sigma) = \limsup_{m\to\infty}\frac{\log(\ind(\intgraph_{[m]}))}{m} =  \limsup_{m\to \infty}\frac{\log(\ind(\intgraph_{[m]}))}{\gamma(\transgraph_{[m]})}\]
Clearly \[\limsup_{m\to\infty}\frac{\log(\ind(\intgraph_{[m]}))}{m} \ge \liminf_{m\to\infty}\frac{\log(\ind(\intgraph_{[m]}))}{m} = \liminf_{m\to \infty}\frac{\log(\ind(\intgraph_{[m]}))}{\gamma(\transgraph_{[m]})}.\]
So to complete the proof it is enough to show that \[\liminf_{m\to\infty}\frac{\log(\ind(\intgraph_{[m]}))}{m} \ge 
\olenti(\Sigma).\]

Since we have $\ind(\intgraph_{[m]})$ $m$-separated words and $\transgraph$ is primitive, we can make from them $(\ind(\intgraph_{[m]}))^{\lfloor M/\gamma(\transgraph_{[m]})\rfloor}$ $M$-separated words by concatenating, as in the proof of Theorem~\ref{thm:primitive}.  Thus
$$\ind(\intgraph_{[M]}) \ge  \ind(\intgraph_{[m]})^{\lfloor M/\gamma(\transgraph_{[m]})\rfloor},$$ and therefore $$\liminf_{M\to\infty}\frac{\log(\ind(\intgraph_{[M]}))}{M} \ge \frac{\log(\ind(\intgraph_{[m]}))}{\gamma(\transgraph_{[m]})}.$$  Since this holds for all $m$, we have that 
$$\liminf_{M\to\infty} \frac{\log(\ind(\intgraph_{[M]}))}{M} \ge \limsup_{m\to\infty} \frac{\log(\ind(\intgraph_{[m]}))}{\gamma(\transgraph_{[m]})} 
 =  \olenti(\Sigma). $$
\end{proof}

\begin{cor}\label{cor:sup}
If $\transgraph$ is primitive, then
\[\olenti(\Sigma) = \sup_{m>0}\Big\{\frac{\log(\ind(\intgraph_{[m]}))}{\gamma(\transgraph_{[m]})}\Big\}.\]
\end{cor}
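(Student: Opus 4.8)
The plan is to prove the two inequalities $\olenti(\Sigma) \ge \sup_{m>0}\big\{\frac{\log(\ind(\intgraph_{[m]}))}{\gamma(\transgraph_{[m]})}\big\}$ and $\olenti(\Sigma) \le \sup_{m>0}\big\{\frac{\log(\ind(\intgraph_{[m]}))}{\gamma(\transgraph_{[m]})}\big\}$ separately, drawing entirely on machinery already established.

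First I would fix $m>0$ and note that, since $\transgraph$ is primitive, Proposition~\ref{prop:higherexp} guarantees that the higher graph $\transgraph_{[m]}$ is again primitive, with index of primitivity $\gamma(\transgraph_{[m]})$. Thus Theorem~\ref{thm:primitive} applies to the shift space with overlaps $(\Sigma_{\transgraph_{[m]}},\intgraph_{[m]})$ and yields
\[
\olent{\intgraph_{[m]}}(\Sigma_{\transgraph_{[m]}}) \ge \frac{\log(\ind(\intgraph_{[m]}))}{\gamma(\transgraph_{[m]})}.
\]
By Theorem~\ref{thm:higherentropy} the left-hand side equals $\olenti(\Sigma_{\transgraph})$, so $\olenti(\Sigma)\ge \frac{\log(\ind(\intgraph_{[m]}))}{\gamma(\transgraph_{[m]})}$ for every $m>0$. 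Taking the supremum over $m$ gives the first inequality.

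For the reverse inequality I would invoke Theorem~\ref{thm:limit}, which (because $\transgraph$ is primitive) states that $\olenti(\Sigma) = \lim_{m\to\infty}\frac{\log(\ind(\intgraph_{[m]}))}{\gamma(\transgraph_{[m]})}$. Since the limit of a convergent sequence never exceeds the supremum of its terms, this limit is at most $\sup_{m>0}\big\{\frac{\log(\ind(\intgraph_{[m]}))}{\gamma(\transgraph_{[m]})}\big\}$, giving $\olenti(\Sigma)\le\sup_{m>0}\big\{\frac{\log(\ind(\intgraph_{[m]}))}{\gamma(\transgraph_{[m]})}\big\}$. Combining the two inequalities proves the corollary.

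I do not anticipate a genuine obstacle: the corollary is essentially a repackaging of Theorem~\ref{thm:limit} together with the per-$m$ lower bound supplied by Theorem~\ref{thm:primitive}. The only point needing a moment's care is verifying that Theorem~\ref{thm:primitive} really is applicable to \emph{every} higher shift, i.e.\ that each $\transgraph_{[m]}$ is primitive, which is exactly the content of Proposition~\ref{prop:higherexp}. Alternatively, one could bypass Theorem~\ref{thm:limit} altogether and read the per-$m$ bound directly off the intermediate inequality $\liminf_{M\to\infty}\frac{\log(\ind(\intgraph_{[M]}))}{M} \ge \frac{\log(\ind(\intgraph_{[m]}))}{\gamma(\transgraph_{[m]})}$ derived inside the proof of Theorem~\ref{thm:limit}, but routing the argument through Theorems~\ref{thm:primitive} and~\ref{thm:higherentropy} is the cleaner exposition.
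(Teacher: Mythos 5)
Your proposal is correct, and it diverges from the paper's own proof in a way worth noting. The paper never invokes Theorem \ref{thm:higherentropy}; for the lower bound it re-runs the concatenation construction inline, proving $\ind(\intgraph_{[k\gamma(\transgraph_{[m]})]})\ge\ind(\intgraph_{[m]})^{k}$ and letting $k\to\infty$, which forces it to pass a bound along the subsequence $M=k\gamma(\transgraph_{[m]})$ back to $\olenti(\Sigma)$ via Theorem \ref{thm:limit}. Your route---apply Theorem \ref{thm:primitive} directly to the higher shift $(\Sigma_{\transgraph_{[m]}},\intgraph_{[m]})$, which is legitimate because Proposition \ref{prop:higherexp} makes $\transgraph_{[m]}$ primitive (the point you correctly flag, and which also covers the trivial one-vertex case), and then transfer back with Theorem \ref{thm:higherentropy}---yields the same per-$m$ bound with no subsequence or limit argument at all, so Theorem \ref{thm:limit} enters your proof only for the upper bound $\olenti(\Sigma)\le\sup$, which is exactly the step the paper dismisses as ``obvious'' (and which likewise rests implicitly on Theorem \ref{thm:limit}, since the entropy equals the limit of the sequence and a limit never exceeds the supremum). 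Both arguments run on the same two engines, concatenation of separated words in a primitive graph and the limit theorem; the difference is packaging. Yours cites the concatenation as a black box (the same combination of Theorems \ref{thm:primitive} and \ref{thm:higherentropy} the paper itself uses in Example \ref{ex:doubling3}), which is more modular and makes transparent which half of the equality genuinely needs the limit theorem, while the paper's inline version keeps the whole computation at the level of independence numbers of the graphs $\intgraph_{[m]}$.
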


\begin{proof}
It is obvious that \[\olenti(\Sigma) \le \sup_{m>0}\Big\{\frac{\log(\ind(\intgraph_{[m]}))}{\gamma(\transgraph_{[m]})}\Big\}.\] We must prove the reverse inequality.  To do this, observe that since we have $\ind(\intgraph_{[m]})$ $m$-separated words and $\transgraph$ is primitive, we can make them into $\ind(\intgraph_{[m]})^k$ words of length $k\gamma(\transgraph_{[m]})$ (for any $k$) by concatenating.  Thus $\ind(\intgraph_{[k\gamma(\transgraph_{[m]})]}) \ge \ind(\intgraph_{[m]})^k$, so 
\[ \frac{\log(\ind(\intgraph_{[k\gamma(\transgraph_{[m]})]})}{k\gamma(\transgraph_{[m]})} \ge \frac{\log((\ind(\intgraph_{[m]}))^k)}{k\gamma(\transgraph_{[m]})} = \frac{\log(\ind(\intgraph_{[m]}))}{\gamma(\transgraph_{[m]})}. 
\]
By taking the limit of both sides as $k\to\infty$, we obtain \[\olenti(\Sigma) \ge \frac{\log(\ind(\intgraph_{[m]}))}{\gamma(\transgraph_{[m]})};\] since this holds for any $m$, we are done.
\end{proof}

\begin{ex}
Recall from Example \ref{ex:doubling3} that $\transgraph_{[m]}$ is a primitive graph with index of primitivity $m+1$ and independence number $2^{m}$. By Corollary \ref{cor:sup}, \[h(f)\ge\olenti(\Sigma) = \sup\Big\{\frac{\log(\ind(\intgraph_{[m]}))}{\gamma(\transgraph_{[m]})}\Big\}=\sup\Big\{\frac{\log 2^{m}}{m+1}\Big\}=\sup\Big\{\frac{m\log 2}{m+1}\Big\}=\ln 2.\] Theorem \ref{thm:limit} gives us the same bound, \[h(f)\ge\olenti(\Sigma)=\lim_{m\to\infty}\frac{\log(\ind(\intgraph_{[m]}))}{m}=\lim_{m\to \infty}\frac{\log 2^{m}}{m}=\ln 2,\] and happens to be the limit of a constant sequence.

\end{ex}

The following example shows that the sequence $\displaystyle \Big\{\frac{\log(\ind(\intgraph_{[m]}))}{\gamma(\transgraph_{[m]})}\Big\}$ may not be nondecreasing.

\begin{ex}\label{ex:notnondecreasing}
Consider the graph $\transgraph\intgraph$ shown in Figure \ref{fig:highercounterexample}.
\begin{figure}[ht]
\includegraphics[width=2in]{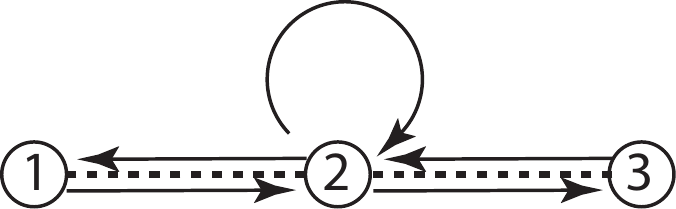}
\caption{}\label{fig:highercounterexample}
\end{figure}

It is easy to check by hand that \[\frac{\log(\ind(\intgraph_{[1]}))}{\gamma(\transgraph_{[1]})}=\frac{1}{2}\ln(2)>\frac{1}{3}\ln(2)=\frac{\log(\ind(\intgraph_{[2]}))}{\gamma(\transgraph_{[2]})}.\] Thus the sequence is not nondecreasing.

We used a computer to find the first several terms of the sequence; based on this, the pattern appears to be $\displaystyle\Big\{\frac{1}{n+1}\ln(2^{m})\Big\}_{n=1}^{\infty}$ where $m=n/2$ for $n$ even and $m=(n+1)/2$ for $n$ odd. Simplified, the $n$th term becomes  $\displaystyle\frac{n\ln 2}{2(n+1)}$ when $n$ is even and $\frac{1}{2}\ln(2)$ when $n$ is odd. The two values given by the theorem (the supremum and the limit superior) are equal and the limit superior is equal to the limit. In particular, this common value, $\frac{1}{2}\ln(2)$ was the first term in the sequence!
\end{ex}

\bibliographystyle{amsplain} 
\bibliography{symbolicindex} 
\end{document}